\newtheorem{prethm}{{\bf Theorem}}[section]
\newenvironment{thm}{\begin{prethm}{\hspace{-0.5
em}{\bf.}}}{\end{prethm}}
\newtheorem{prepro}{{\bf Theorem}}
\newtheorem{precor}[prethm]{{\bf Corollary}}
\newenvironment{cor}{\begin{precor}{\hspace{-0.5
em}{\bf.}}}{\end{precor}}
\newtheorem{preconj}[prethm]{{\bf Conjecture}}
\newenvironment{conj}{\begin{preconj}{\hspace{-0.5
em}{\bf.}}}{\end{preconj}}
\newtheorem{preremark}[prethm]{{\bf Remark}}
\newtheorem{prelem}[prethm]{{\bf Lemma}}
\newenvironment{lem}{\begin{prelem}{\hspace{-0.5
em}{\bf.}}}{\end{prelem}}
\newtheorem{preque}[prethm]{{\bf Problem}}
\newenvironment{prob}{\begin{preque}{\hspace{-0.5
em}{\bf.}}}{\end{preque}}
\newtheorem{prealphthm}{{\bf Problem}}
\newenvironment{alphprob}
{
\begin{prealphthm}{\hspace{-0.5 em}
{\bf\ }}}{
\end{prealphthm}
}
\newtheorem{preobserv}[prethm]{{\bf Observation}}
\newenvironment{observ}{\begin{preobserv}{\hspace{-0.5
em}{\bf.}}}{\end{preobserv}}
\newtheorem{predef}[prethm]{{\bf Definition}}
\newtheorem{preproposition}[prethm]{{\bf Proposition}}
\newtheorem{preproof}{{\bf Proof.}}
\newtheorem{preprooff}{{\bf Proof}}
\newenvironment{proof}[1]{\begin{preproof}{\rm
#1}\hfill{$\Box$}}{\end{preproof}}
\newtheorem{preproofF}{{\bf Proof of}}
\title{\bf\Large 
The List Square Coloring Conjecture fails for bipartite planar graphs and their line graphs
}
\author{
Morteza Hasanvand\thanks{Department of Mathematical Sciences, Sharif University of Technology, Tehran, Iran.
E-mail: {\tt morteza.hasanvand@alum.sharif.edu}.
Yokohama National University, Yokohama, Japan.
}\\{\footnotesize{${}$\it Dedicated to Faramarz Yaghoobi on the occasion of his $72$th birthday}}
}
\date{}
\begin{document}
\maketitle
\begin{abstract}{
Kostochka and Woodall (2001) conjectured that the square of every graph has the same chromatic number and list chromatic number. In 2015 Kim and Park disproved this conjecture for non-bipartite and bipartite graphs. It was asked by several authors whether this conjecture holds for bipartite graphs with small degrees, claw-free graphs, or line graphs. In this paper, we introduce several kinds of counterexamples to this conjecture to solve three open problems posed by Kim and Park~(2015), Kim, Kwon, and Park~(2015), and Dai, Wang, Yang, and Yu~(2018). In particular, we disprove a planar version of this conjecture proposed by Havet, Heuvel, McDiarmid, and Reed (2017). 

This conjecture was originally proposed to make a stronger version of the List Total Coloring Conjecture. 
In order to make a revised version, it remains to decide whether this conjecture holds for bipartite graphs $G$ by imposing 
a lower bound on the chromatic number of the square graph $G^2$ in terms of its maximum degree as the condition $\chi(G^2) \ge \frac{1}{2} \Delta(G^2)+1$ (or by adding an upper bound on the number of colors used in lists for a weaker version). To support this version, we will show that the bipartite condition cannot be dropped even by increasing the lower bound arbitrarily.

Finally, we investigate non-choosable graphs with bounded maximum degree in bipartite or planar graphs. Consequently, we improve several graph constructions due to Erd\H os, Rubin, and Taylor~(1980), Bessy, Havet, and Palaysi (2002), Voigt (1993), Mirzakhani (1996), and Glebov, Kostochka, and Tashkinov (2005) in terms of maximum degree or order. In addition, we characterize edge-minimal $3$-chromatic non-$3$-choosable (resp. $4$-chromatic non-$4$-choosable) graphs of order at most $9$ (resp. $11$) and settle a question posed by Nelsen~(2019).
\\
\\
\noindent {\small {\it Keywords}: Square graph; list coloring; chromatic-choosable; maximum degree; claw-free; bipartite; planar. }} {\small
}
\end{abstract}
%
%
%
%
%
%
%
%
%
%
\section{Introduction}
In this article, all graphs are considered simple unless otherwise stated. Let $G$ be a graph. 
The vertex set and the edge set of $G$ are denoted by $V(G)$ and $E(G)$, respectively. 
The graph $G$ is said to be {\bf $k$-colorable} if its vertices can be colored by $k$ colors such that adjacent vertices have different colors.
The {\bf chromatic number} $\chi(G)$ of $G$ is the minimum number of such integers $k$.
The graph $G$ is said to be {\bf $k$-choosable} if 
its vertices can be colored such that the color of every vertex $v$ lies in $L(v)$, where $L(v)$ is an arbitrary set of colors with size $k$. The {\bf list chromatic number} $\chi_\ell(G)$ of $G$ is the minimum number of such integers $k$.
A graph $G$ is called {\bf chromatic-choosable}, if $\chi(G) = \chi_\ell(G) $.
We say that a graph $G$ is {\bf $i$-strongly $k$-choosable} if 
its vertices can be colored such that the color of every vertex $v$ lies in $L(v)$, where $L(v)$ is an arbitrary set of colors with size $k$ and the union of all of them has size at most $k+i$.
The {\bf $i$-strong list chromatic number} $\chi^i_\ell(G)$ of $G$ is the minimum number of such integers $k$.
Note that every $i$-strongly $k$-choosable graph is also $i$-strongly $(k+1)$-choosable and $(i-1)$-strongly $k$-choosable, which means that $\chi(G) =\chi^0_\ell(G) \le \chi^1_\ell(G) \le \cdots \le \chi^\infty_\ell(G) =\chi_\ell(G)$.
We will also show that $\chi^i_\ell(G) \le \chi^{i-1}_\ell(G)+\chi(G)-1$.
We say that $G$ is {\bf $i$-strongly chromatic-choosable}, if $\chi (G) =\chi^i_\ell(G)$.
Choosability with bounded number of used colors had been investigated in some papers; for example, see~\cite{Kang-2018, Kral-Sgall-2018}.
A graph $G$ is called {\bf claw-free}, if there is no triple of non-adjacent vertices having a common neighbour.
For a graph $G$, the {\bf line graph} $L(G)$ is a graph whose vertex set is $ E(G)$ and also two $e_1,e_2\in E(G)$ are adjacent in $L(G)$ if they have a common end in $G$. Note that line graphs are claw-free.
The {\bf total graph } $T(G)$ is a graph whose vertex set is $V(G)\cup E(G)$, and two $e_1,e_2\in E(G)$ are adjacent if they have a common end in $G$, two $v_1,v_2\in V(G)$ are adjacent if they are adjacent in $G$, and also $v\in E(G)$ and $e\in E(G)$ are adjacent if $e$ is incident with $v$ in $G$.
For a positive integer $k$, the {\bf $k$-th power} $G^k$ of a graph $G$ is a graph with the same vertex set and two vertices are adjacent if their distance in $G$ is at most $k$. 
For the special case $k=2$, the graph $G^2$ is called the {\bf square} of $G$.
We denote by $S(G)$ the {\bf subdivision graph} of $G$ which can be obtained from it by inserting a new vertex on each edge.
These graphs are bipartite and have girth at least $6$ when $G$ is a simple graph.
It is easy to check that total graphs are square of subdivision graphs that means $T(G)=S(G)^2$.
We denote by $K_n$ and $\overline{K_{n}}$ the complete graph of order $n$ and its complement.
Likewise, we denote by $K_{n_1*r_1, n_2*r_2}$ the {\bf complete multipartite graph} having $r_i$ parts of size $n_i$. 
When $r_2=0$, we only write $K_{n_1*r_1}$.
The {\bf join of two graphs} $G$ and $H$ is denoted by $G\vee H$ which is the graph obtained from them by joining every vertex of $G$ to every vertex of $H$.
For a positive integer $n$, we denote by $\mathbb{Z}_n$ the cyclic group of order $n$ with elements $1,\ldots, n$. 
For two positive integer $n$ and $k$ with $ k<n/2$, the {\bf generalized Petersen graph $P(n,k)$} refers to a graph 
with vertices $v_i$ and $u_i$ and edges $v_iv_{i+1}$, $v_iu_i$, and $u_iu_{i+k}$ where $i\in \mathbb{Z}_n$.
Note that $P(5,2)$ is the Petersen graph.

In 1997 Borodin, Kostochka, and Woodall~\cite{Borodin-Kostochka-1997} conjectured that total graphs are chromatic-choosable.
\begin{conj}{\rm (List Total Coloring Conjecture \cite{Borodin-Kostochka-1997})}
{Every graph $G$ satisfies $\chi(T(G))= \chi_\ell(T(G))$, where 
$T(G)=S(G)^2$.
}\end{conj}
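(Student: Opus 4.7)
The conjecture has been open since 1997, so I will only sketch a plausible line of attack and honestly indicate where it stalls. The natural approach is the Bondy--Boppana--Siegel kernel method: to show $\chi_\ell(T(G))\le\chi(T(G))$, it suffices to exhibit a kernel-perfect orientation of $T(G)$ with maximum out-degree at most $\chi(T(G))-1$. I would begin by fixing an optimal total coloring $c$ of $G$ and orienting each edge of $T(G)$ from the smaller-colored end to the larger-colored end; this automatically achieves the out-degree bound, and reduces the task to verifying that every induced sub-digraph has a kernel (an independent absorbing set).

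Because $T(G)=S(G)^2$ and $S(G)$ is bipartite with girth at least $6$, the vertex set of $T(G)$ splits as $V(G)\cup E(G)$, inducing copies of $G$ and $L(G)$, with crossing edges encoding the incidence relation of $G$. On the $L(G)$-part, Galvin's stable-matching argument --- applied to the auxiliary bipartite structure determined by $c$ --- supplies the required kernels. The $G$-part is comparatively tame and can be handled by direct analysis after the colors on $V(G)$ are fixed. The real work is to reconcile these two families of kernels across the incidence edges.

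The main obstacle is precisely this reconciliation: a kernel chosen on the $E(G)$-side must absorb, through incidence edges, every non-kernel vertex of $V(G)$, and symmetrically on the other side, which imposes a global consistency condition on $c$ that cannot in general be arranged merely by picking $c$ optimally. Breaking this circular dependence is the crux, and a successful attack would presumably require either a refined choice of $c$ that exploits structural information about $G$ (girth, bipartiteness, planarity), or an Alon--Tarsi polynomial method argument on $T(G)$ in which a non-vanishing coefficient of the graph polynomial is identified for the monomial degree sequence $(\chi(T(G))-1,\ldots,\chi(T(G))-1)$. I would expect the kernel plan to succeed only for restricted classes --- for example $G$ bipartite, or $G$ of small maximum degree where the Total Coloring Conjecture itself is established --- and the general case to remain out of reach, consistent with the long-standing open status of the conjecture. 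In view of the author's introduction of $\chi_M$, one should also note that even proving $\chi_M(T(G))\le\chi(T(G))+1$ would already be a substantial step and seems to require genuinely new input beyond the kernel method.
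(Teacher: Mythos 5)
This statement is a conjecture that the paper merely quotes from Borodin, Kostochka, and Woodall; the paper contains no proof of it (and indeed it remains open), so there is nothing in the source to compare your attempt against. Your candor about the attempt stalling is appropriate. The paper's own contribution bears on this conjecture only indirectly: since $T(G)=S(G)^2$ and $S(G)$ is bipartite of girth at least $6$ with every subdivision vertex of degree $2$, Theorem~\ref{thm:bipartite} (bipartite cubic girth-$6$ counterexamples to the List Square Coloring Conjecture) shows that the only thing protecting the List Total Coloring Conjecture from the paper's constructions is the degree-$2$ condition on one side --- exactly the content of Problems~\ref{prob:Kim-Park-2015-Bipartite} and~\ref{prob:k:Kim-Park-2015-Bipartite}.

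That said, your sketch contains a concrete error that you should fix before pursuing it: orienting each edge of $T(G)$ from the smaller-colored end to the larger-colored end produces an \emph{acyclic} orientation, and every induced subdigraph of an acyclic digraph automatically has a kernel (take sinks greedily). So kernel-perfection is the trivial part of your plan, not the part that needs Galvin's stable matchings. What fails is the claim that this orientation ``automatically achieves the out-degree bound'': the out-degree of a vertex is the number of its neighbours carrying a \emph{larger} colour, which is bounded only by its degree in $T(G)$, roughly $2\Delta(G)$, and not by $\chi(T(G))-1\approx\Delta(G)$. (A vertex coloured $1$ in a clique-free neighbourhood can have all of its neighbours coloured higher.) You have the easy step and the hard step interchanged: the entire difficulty of the kernel method here, as in Galvin's theorem, is to construct a kernel-perfect orientation that is \emph{not} acyclic but does satisfy the out-degree bound, and your proposed orientation gives no leverage on that.
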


Motivated by this conjecture, Kostochka and Woodall (2001)~\cite{Kostochka-Woodall-2001} proposed a stronger conjecture which says that the square of graphs are chromatic-choosable. They also confirmed this conjecture for many small graphs.
\begin{conj}{\rm (List Square Coloring Conjecture \cite{Kostochka-Woodall-2001})}\label{conj:Kostochka-Woodall-2001}
{Every graph $G$ satisfies $\chi(G^2)= \chi_\ell(G^2)$.
}\end{conj}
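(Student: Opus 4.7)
The plan is to \emph{refute} Conjecture \ref{conj:Kostochka-Woodall-2001} by exhibiting explicit graphs $G$ with $\chi(G^2) < \chi_\ell(G^2)$. Since the abstract directs me to cubic counterexamples and further asks for examples that are claw-free or line graphs, I would aim for low-degree, highly symmetric structures whose squares are regular or near-regular, so that $\chi(G^2)$ can be pinned down by an explicit coloring together with a matching lower bound.

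The first technical step is to substitute the Mirzakhani-chromatic number for the list chromatic number, using the inequality $\chi_M(H) \le \chi_\ell(H) + 1$ highlighted in the introduction. It therefore suffices to produce a $G$ with $\chi_M(G^2) \ge \chi(G^2) + 2$, since this would force $\chi_\ell(G^2) \ge \chi(G^2) + 1$. The reduction is decisive because bounding $\chi_M$ from below only requires exhibiting a single forbidden-color function $c_0 : V(G^2) \to \mathbb{Z}$ under which no proper coloring with the target palette avoids $c_0$ pointwise; this is a one-shot combinatorial task rather than a quantification over all list assignments.

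For the construction itself, I would probe the generalized Petersen family $P(n,k)$ and the subdivisions $S(P(n,k))$, since these are cubic, vertex-transitive under a natural $Z_n$-action, and have well-behaved squares. For each candidate I would first write down $\chi(G^2)$ via an explicit cyclic coloring, then choose $c_0$ to exploit the $Z_n$ symmetry, for instance by letting $c_0(v_i)$ and $c_0(u_i)$ depend linearly on $i$ modulo a carefully chosen prime so that the forbidden values clash with every attempted rotation of an optimal coloring. To transport a cubic counterexample to the bipartite, claw-free, or line-graph settings raised by Kim--Park, Kim--Kwon--Park, and Dai--Wang--Yang--Yu, I would use the identity $T(G) = S(G)^2$ recorded in the introduction: a bad cubic $G$ yields a bipartite $S(G)$ whose square is the total graph $T(G)$, and for the claw-free and line-graph problem I would pass through $L(H)$ for a suitable $H$ whose edge-set carries the cubic obstruction on the other side.

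The main obstacle will be verifying $\chi_M(G^2) \ge \chi(G^2) + 2$ for a specific $G$. The upper bound on $\chi(G^2)$ is cheap to supply by explicit coloring, but the lower bound on $\chi_M(G^2)$ requires ruling out \emph{every} proper coloring compatible with the chosen $c_0$. This will likely demand either an Alon--Tarsi / combinatorial-Nullstellensatz-type polynomial argument keyed to the automorphism group of $G^2$, or a tight discharging/case analysis restricted to a rigid local substructure such as a $C_5$-fibre of $P(n,k)$. Calibrating $G$ to be small enough that the verification is tractable (by hand or by computer), yet large and rigid enough that the forced gap cannot be closed by any global rearrangement of the coloring, is the crux of the argument.
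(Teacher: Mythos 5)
Your overall strategy coincides with the paper's: the conjecture is refuted, the refutation is routed through the Mirzakhani-chromatic number via $\chi_M(G^2)\ge\chi(G^2)+2\Rightarrow\chi_\ell(G^2)>\chi(G^2)$, the candidates are cubic circulant-like bipartite graphs and line graphs of generalized Petersen graphs (the paper uses $L(P(n,2))$, $L(P(n,3))$, $L(S(K_4))$, $L(S(K_6))$, and a cubic bipartite graph on $x_i,y_i$ with edges $x_iy_{i-2},x_iy_i,x_iy_{i+1}$), and the upper bound on $\chi(G^2)$ is supplied by an explicit cyclic coloring exactly as you describe.

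The genuine gap is that your proposal never produces a verified instance: everything hinges on establishing $\chi_M(G^2)\ge\chi(G^2)+2$ for a concrete $G$, and you defer this to ``an Alon--Tarsi / Nullstellensatz-type polynomial argument'' or an unspecified discharging scheme. Neither is what makes the examples work, and a polynomial-method lower bound on $\chi_M$ keyed to the automorphism group is not a step you could carry out as stated (Alon--Tarsi gives upper bounds on choosability, not lower bounds). What the paper actually does is choose the forbidden-color function so that the rigid structure of $G^2$ forces near-unique colorings: e.g.\ in the girth-$6$ cubic bipartite case, every four consecutive $x_i$ form a clique in $G^2$, so in any $5$-coloring avoiding the list $\bar1$ on all $x_i$ the colors $c_1,\dots,c_4=\{2,3,4,5\}$ must repeat periodically along the $x$-side, which then forces the color $1$ onto too many mutually adjacent $y$-vertices once the $y$-lists are split between $\bar2$ and $\bar3$; in the $L(S(K_4))$ case the argument counts how many times each color can appear using the four maximum independent sets of size $3$. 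Without specifying such a forbidden-color function and carrying out this finite forcing argument, the proposal remains a plausible research plan rather than a proof.
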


After a long time, Kim and Park (2015)~\cite{Kim-Park-2015, Kim-Park-2015-Bipartite} constructed some families of non-bipartite and bipartite counterexamples to this conjecture which the square of them are complete multipartite graphs and the smallest one contains $15$ vertices, see \cite[Figure 3]{Kim-Park-2015}.

\begin{thm}{\rm (\cite{Kim-Park-2015-Bipartite})}\label{conj:}
{There exists an infinite family of bipartite graphs $G$ satisfying $\chi(G^2) \neq \chi_\ell(G^2) $.
}\end{thm}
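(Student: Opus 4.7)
The plan is to adapt the non-bipartite counterexamples of Kim and Park~\cite{Kim-Park-2015} to the bipartite setting by choosing a bipartite ground graph whose square realizes (or contains as an induced subgraph) a complete multipartite graph $K_{m*r} := K_{m,m,\ldots,m}$ ($r$ parts of size $m$) for which the classical list-coloring gap $\chi_\ell(K_{m*r}) > r = \chi(K_{m*r})$ is known to hold. The existence of such $(m,r)$ follows from the Erd\H{o}s--Rubin--Taylor / Alon--Krivelevich type lower bounds on the list chromatic number of complete multipartite graphs---the same bounds invoked in the original non-bipartite counterexamples.

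First, I would fix such a pair $(m,r)$ and construct a bipartite graph $G = (A \cup B, E)$ with large girth in which two vertices of $A$ (respectively, $B$) share a common neighbor in $B$ (respectively, $A$) precisely when they lie in different classes of a prescribed partition of $A$ (respectively, $B$) into $r$ classes of size $m$. A convenient way to realize this is via an incidence-type construction: assign each vertex a \emph{type} in $\{1,\ldots,r\}$ and an \emph{index} in $\{1,\ldots,m\}$, then place the cross-edges of $G$ along a bipartite incidence structure (e.g., a transversal design, an incidence graph of a projective plane, or a suitable bipartite double cover of a Kim--Park graph) so that common neighborhoods encode exactly the ``different type'' relation. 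Since $G$ is bipartite by construction, and the induced subgraphs $G^2[A]$ and $G^2[B]$ inherit the desired complete multipartite structure, the remaining $G^2$-edges are precisely the original edges of $G$ between $A$ and $B$.

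Next, I would verify the two chromatic parameters. For the upper bound $\chi(G^2) \le r$, I exhibit an explicit proper coloring by coloring each vertex of $A \cup B$ according to its type; same-side $G^2$-edges are properly colored since they only join vertices of distinct types, and the construction can be arranged so that cross-side edges of $G$ also respect this coloring (possibly after a global shift of types on one side). For the lower bound $\chi_\ell(G^2) > r$, I transplant the Kim--Park adversarial list assignment onto the $K_{m*r}$ substructure inside $A$, and extend the lists on $B$ compatibly so that any valid list coloring of $G^2$ restricts to a valid list coloring of the bad substructure, yielding a contradiction with $\chi_\ell(K_{m*r}) > r$.

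The main obstacle is controlling the interaction between the two sides: the original edges of $G$ add extra list-coloring constraints in $G^2$ between $A$ and $B$ that could, in principle, either destroy the complete multipartite structure of the induced squares or, more subtly, make the bad list assignment satisfiable via a cross-side ``escape.'' The delicate part of the argument is designing the bipartite incidence structure so that cross-edges respect the same type partition used in the adversarial lists, and then propagating the adversarial lists coherently across both sides so they reinforce rather than relax one another. Once such a $G$ is in hand, producing an infinite family follows by scaling the parameters $m$ and $r$ (or enlarging the underlying design), yielding the desired infinite family of bipartite counterexamples.
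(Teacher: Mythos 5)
Your plan is essentially a reconstruction of the strategy of the cited Kim--Park paper (squares that realize complete multipartite graphs $K_{m,\ldots,m}$ with a known list-chromatic gap), whereas the paper at hand establishes this statement by a different and more concrete route: Theorem~\ref{thm:bipartite} builds an explicit cubic bipartite circulant-type graph on $\{x_i,y_i : i\in Z_n\}$ with edges $x_iy_{i-2},x_iy_i,x_iy_{i+1}$, shows $\chi(G^2)=4$ by a periodic colouring, and then exhibits a bad assignment of $5$-lists of the very restricted form $\{1,\ldots,5\}\setminus\{i\}$, so that the colours of the $x_i$ are forced into a $4$-periodic pattern that cannot be completed on the $y_i$. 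That argument is elementary, self-contained, and gives the stronger conclusion $\chi_M(G^2)>\chi(G^2)+1$ with bounded (cubic) degrees, which is the actual point of the paper.

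As a proof, however, your proposal has a genuine gap: the construction is never produced. You correctly identify the two places where the argument can fail --- (i) arranging the bipartite incidence structure so that $G^2[A]$ is exactly $K_{m*r}$ (every cross-class pair of $A$ must share a neighbour in $B$, and no same-class pair may), while the surviving edges of $G$ and the cliques induced on $B$ do not push $\chi(G^2)$ above $r$; and (ii) extending the adversarial lists to $B$ so that no valid colouring escapes through the cross-side constraints --- but you resolve neither. Phrases such as ``the construction can be arranged'' and ``extend the lists on $B$ compatibly'' are precisely the content of the theorem; in Kim and Park's bipartite paper these steps occupy the bulk of the work, and they are not automatic: a generic incidence structure (projective plane, transversal design) will typically create common neighbours inside a class or raise the clique number of $G^2$, breaking the equality $\chi(G^2)=r$. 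Until a specific $G$ is written down and both verifications are carried out, the proposal is a plausible research plan rather than a proof.
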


They also posed the following problems in their paper and partially answered the second problem by giving the upper bound of $6$ on $k$. Note that total graphs are square of bipartite graphs having degree $2$ in one side.
In this paper, we introduce cubic bipartite planar counterexamples to the List Square Coloring Conjecture which consequently shows that $k$ must be at most $2$ (if there would exist); see Figures~\ref{fig:CubicBipartitePlanar-24} and~\ref{fig:Smaller-planar-or-bipartite}. These examples also disprove a planar version of Conjecture~\ref{conj:Kostochka-Woodall-2001} in~\cite[Conjecture 6.4]{Havet-Heuvel-McDiarmid-Reed-2017} proposed by Havet, Heuvel, McDiarmid, and Reed (2017).
\begin{alphprob}{\rm (\cite{Kim-Park-2015-Bipartite})}\label{prob:Kim-Park-2015-Bipartite}
{If G is a bipartite graph such that every vertex of one partite set has degree at most $2$, then is it true that $\chi(G^2) = \chi_\ell(G^2)$?
}\end{alphprob}
\begin{alphprob}{\rm (\cite{Kim-Park-2015-Bipartite})}\label{prob:k:Kim-Park-2015-Bipartite}
{If the answer to Problem~\ref{prob:Kim-Park-2015-Bipartite} is yes, then what is the largest $k$ such that $G^2$ is chromatic-choosable for every bipartite graph G with a partite set in which each vertex has degree at most $k$?
}\end{alphprob}

We feel that the square of bipartite graphs must be chromatic-choosable provided that their the chromatic numbers are large enough compared to their maximum degree. In particular, we propose the following conjecture which still implies the List Total Coloring Conjecture if $c=1$. More precisely, for total graphs we have $\chi(T(G)) \ge \frac{1}{2}\Delta(T(G)) +1$. Note that the integer number $c$ cannot be reduced by $1/2$, because the bipartite graphs described in Theorem~\ref{thm:bipartite:planar} satisfies $ \chi(G^2)=4$ and $\Delta(G^2)=7$.

\begin{conj}{\rm (Modified Version of Conjecture~\ref{conj:Kostochka-Woodall-2001})}
{If $G$ is a bipartite graph, then $\chi(G^2)= \chi_\ell(G^2)$, provided that $\chi(G^2) \ge \frac{1}{2}\Delta(G^2)+c$ for a fixed positive integer $c$ with $c\ge 1$.
}\end{conj}

In 1997 Gravier and Maffray~\cite{Gravier-Maffray-2001} conjectured that claw-free graphs are chromatic-choosable which is a stronger version of the following conjecture due to
 Vizing, Gupa, Albertson and Collins, and Bollob{\' a}s and
Harris, see~\cite{Jensen-Toft-1995}. For bipartite graphs, Conjecture~\ref{intro:conj:List Coloring Conjecture} is confirmed in~\cite{Galvin-1995} completely.

\begin{conj}{\rm (List Coloring Conjecture)}\label{intro:conj:List Coloring Conjecture}
{Every graph $G$ satisfies $\chi(L(G))= \chi_\ell(L(G))$.
}\end{conj}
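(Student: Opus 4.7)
The List Coloring Conjecture is one of the most celebrated open problems in chromatic graph theory, and no full proof is currently known; the deepest positive result is Galvin's theorem~\cite{Galvin-1995} for bipartite multigraphs. My plan would be to attempt an extension of Galvin's kernel-method argument from the bipartite case to arbitrary graphs, under no illusion that this will succeed in full generality.

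First, I would recast Galvin's framework in a form suitable for generalization: given $G$ with $\chi'(G)=k$, fix a proper edge colouring $\varphi:E(G)\to\{1,\dots,k\}$ and orient $L(G)$ via a rule that, at each common endpoint of two edges of $G$, compares their $\varphi$-labels. For bipartite $G$ the resulting orientation is kernel-perfect on every induced subdigraph, and the Bondy--Boppana--Siegel kernel lemma (equivalently, the Alon--Tarsi permanent argument) then delivers $\chi_\ell(L(G))\le k=\chi(L(G))$ via K\"onig's edge-colouring theorem. Next, one would define an analogous orientation when $G$ is non-bipartite, for example by combining $\varphi$ with an auxiliary total order on $V(G)$ and orienting edges of $L(G)$ lexicographically by the pair (colour, lower endpoint in the order), then try to prove kernel-perfectness by induction on the induced subdigraph, peeling off a sink-like vertex and re-attaching it after finding a kernel of the rest.

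The main obstacle, and the reason I do not expect this proposal to succeed as stated, is that an odd cycle of $G$ creates a parity conflict in any such lexicographic scheme: one can always arrange an induced sub-collection of edges traversing an odd cycle whose induced subdigraph in $L(G)$ has no kernel. Escaping this obstruction appears to require either a probabilistic argument in the spirit of Kahn's asymptotic result, or a polynomial-method refinement following Alon--Tarsi applied to a carefully chosen auxiliary graph polynomial; both routes introduce substantial technical difficulties and in fact explain why the conjecture has resisted proof for more than three decades.
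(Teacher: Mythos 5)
This statement is the List Coloring Conjecture, which the paper records purely as a conjecture attributed to Vizing, Gupta, Albertson--Collins, and Bollob\'as--Harris (via~\cite{Jensen-Toft-1995}); the paper supplies no proof of it, and indeed none is known. You have correctly identified it as an open problem rather than attempting to manufacture a proof, and your summary of the state of the art is accurate: Galvin's kernel-method argument~\cite{Galvin-1995} settles the bipartite case, and the obstruction you describe --- that the lexicographic orientation of $L(G)$ induced by a proper edge colouring fails to be kernel-perfect once $G$ contains odd cycles --- is precisely the known barrier to extending that argument. So there is nothing in the paper to compare your approach against; the only caveat is that what you have written is a research programme with an explicitly acknowledged fatal gap, not a proof, and it should not be presented as establishing the statement. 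Within the paper's own logic the conjecture plays a purely motivational role (it is implied by the Gravier--Maffray conjecture on claw-free graphs and would follow from the restriction of Conjecture~\ref{conj:revised} to all graphs, which Theorem~\ref{thm:sharpness:conjecture} rules out), so no proof is expected here.
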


 Kim, Kwon, and Park (2015) and
Dai, Wang, Yang, and Yu (2018)~\cite{Dai-Wang-Yang-Yu-2018} posed the following problems in their paper about chromatic-choosability of claw-free graphs and line graphs. In this paper, we answer their problems negatively, for $k=2$, by giving several families of line graphs (see Theorems~\ref{thm:claw-free:cubic:prism},~\ref{thm:line:generalized:P:bipartite}, and~\ref{thm:line:generalized:P:planar}). In particular, we show that there are line (claw-free) graphs $G$ such that $G^2$ is not chromatic-choosable and $\chi(G^2) - \Delta(G^2)/2$ is sufficiently large (see Corollary~\ref{cor:square:graph:maximum-degree-chromatic}).
\begin{alphprob}{\rm (\cite{Kim- Kwon-Park-2015})}\label{prob:Kim- Kwon-Park-2015}
{Is $G^k$ chromatic-choosable for every integer $k\ge 2$ if $G$ is claw-free?
}\end{alphprob}

\begin{alphprob}{\rm (\cite{Dai-Wang-Yang-Yu-2018})}\label{prob:Dai-Wang-Yang-Yu-2018}
{Is $G^2$ chromatic-choosable for every line graph $G$?
}\end{alphprob}

In 2015 Noel, Reed, and Wu~\cite{Noel-Reed-Wu-2015} showed that graphs with small order are $k$-choosable by proving the following theorem which was originally conjectured by Ohba (2002)~\cite{Ohba-2002}. 
This result is also extended to graphs of order $2k+2$ in \cite{Zhu-Zhu-2022, Zhu-Zhu-arXiv} by proving that
 the two graphs $\overline{K_{4}} \vee (2K_{k-1})$ and $K_{1*\frac{k-2}{2},3*\frac{k+2}{2}}$ are all 
edge-minimal $k$-chromatic non-$k$-choosable graphs of order $2k+2$ provided that $k$ is even and $k\ge 4$. As a consequence, all $k$-chromatic non-$k$-choosable graphs $G$ of order $2k+2$ satisfy $ \Delta(G) \ge 2\chi(G)-2$. 
\begin{thm}{\rm (\cite{Noel-Reed-Wu-2015})}\label{thm:Noel-Reed-Wu-2015}
{If $G$ is a $k$-chromatic graph satisfying $|V(G)| \le 2k+1$, then $G$ is $k$-choosable.
}\end{thm}

Motivated by a computer search, we conjecture that the two graphs $\overline{K_{5}} \vee (2K_{k-1})$ and $K_{1*\frac{k-3}{2},3*\frac{k+3}{2}}$ are all edge-minimal $k$-chromatic non-$k$-choosable graphs of order $2k+3$ provided that $k$ is odd and $k\ge 5$. Moreover, by applying our computer program, we succeeded to characterize all edge-minimal $3$-chromatic non-$3$-choosable (resp $4$-chromatic non-$4$-choosable) graphs of order at most $9$ (reps. $11$) and observed that all of them have maximum degree at least $5$ and size at least $19$ (resp. $6$ and $30$); see Figures~\ref{9all} and~\ref{fig:at-most-11-all}. This settles a problem posed by Nelsen \cite[Question 3.19]{Nelsen-2019} about determining smallest non-$3$-choosable graphs.
\begin{thm}
{There exists a $3$-chromatic non-$3$-choosable graph of size $19$ (resp. maximum degree $5$).
}\end{thm}

In this paper, we put forward the following conjecture to strengthen Theorem~\ref{thm:Noel-Reed-Wu-2015} further in terms of maximum degree. To approach the best possible upper bound on $|V(G)|$, we will construct in Corollary~\ref{cor:2k+9:maximum-degree}, a $k$-colorable non-$k$-choosable graph $G$ satisfying $\Delta(G)= 2k-3$ and $|V(G)| = 2k+9$ provided that $k$ is divisible by $4$ and greater than $16$.
\begin{conj}\label{intro:conj:small-graphs}
{If $G$ is a $k$-chromatic graph satisfying $\Delta(G)\le 2k-3$
and $|V(G)| \le 2k+4$, then $G$ is $k$-choosable.
}\end{conj}

In Section~\ref{sec:last}, we will investigate non-choosable graphs with bounded maximum degree in bipartite or planar graphs. Consequently, we improve several graph constructions due to Erd\H os, Rubin, and Taylor~(1980)~\cite{Erdos-Rubin-Taylor-1980}, 
Bessy, Havet, and Palaysi (2002)~\cite{Bessy-Havet-Palaysi-2002}, Voigt (1993)~\cite{Voigt-1995},
Mirzakhani (1996)~\cite{Mirzakhani-1996}, and Glebov, Kostochka, and Tashkinov~(2005)~\cite{Glebov-Kostochka-Tashkinov-2005}
in terms of order or maximum degree.
In particular, we prove the following theorem for bipartite graphs.
\begin{thm}
{For every integer $k$ with $k\ge 3$, there exists a bipartite non-$k$-choosable graph of size $\sum^{k-1}_{i=0}\binom{k}{i}\binom{k+i}{k}$ (resp. $\sum^{k-1}_{i=0}\binom{k}{i}\binom{k+i}{k}+k^2-1$) having maximum degree $2^k-1$ (reps. $2^k-2$). 
}\end{thm}
\section{Solution to Problem~\ref{prob:k:Kim-Park-2015-Bipartite}: Cubic bipartite graphs and prisms}
The following theorem completely solves Problem~\ref{prob:k:Kim-Park-2015-Bipartite} 
 by showing that if Problem~\ref{prob:Kim-Park-2015-Bipartite} would be true, then $k$ must be precisely $2$ (recall that total graphs $T(G)$ are square of subdivision graphs $S(G)$ with girth at least $6$ when $G$ is a simple graph).
\begin{thm}\label{thm:bipartite}
{There exists an infinite family of bipartite cubic graphs $G$ with girth $6$ whose squares are not $1$-strongly chromatic-choosable.
}\end{thm}
\begin{proof}
{Let $n$ be a positive integer. Let $G$ be a cubic bipartite graph with vertices $x_i$ and $y_i$,
and edges $x_{i}y_{i-2}$, $x_iy_i$, $x_{i}y_{i+1}$ where $i\in \mathbb{Z}_n=\{1,\ldots, n\}$.
It is easy to check that $G$ has no cycle with size $4$.
For the case $n=8$, the graph $G$ is illustrated in Figure~\ref{CubicBipartiteGirth6} 
by Hamiltonian cycle $y_{1}x_1y_{2}x_2\cdots y_nx_n$ (anti-clockwise order).
\begin{figure}[h]
 \centering
 \includegraphics[scale = 1.48]{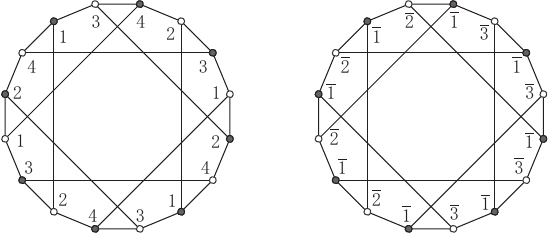}
 \caption{The square of the graph $G$ is $4$-colorable (left) but not $4$-choosable (right)} 
\label{CubicBipartiteGirth6}
\end{figure}
We claim that $\chi^1_\ell(G^2) > \chi(G^2)$ provided that $n$ is divisible by $4$.
By the definition, $G^2$ contains the edges of $G$ along with the new edges
$x_ix_{i+1}$, $x_ix_{i+ 2}$, $x_ix_{i+ 3}$, and also
$y_iy_{i+ 1}$, $y_iy_{i+ 2}$, $y_iy_{i+ 3}$ where $i\in \mathbb{Z}_n$.
Since $G^2$ has the clique number $4$, we must have $4 \le \chi(G^2)$ (any four consecutive vertices of $x_i$ or $y_i$ is a clique).
Assume that $n$ is divisible by $4$.
Then we can find a $4$-coloring for $G^2$ by coloring every vertex $x_{i}$ and $y_{i-1}$ by the same color $r$, where $i\stackrel{4}{\equiv}r\in \mathbb{Z}_4$.
Now, we are going to show that $\chi^1_\ell (G^2) > 4$ whenever $n\ge 8$.
We assign the list $\bar{1}$ on all vertices $x_1,\ldots, x_n$, 
assign the list $\bar{2}$ on the vertices $y_1,\ldots, y_{n-4}$, and assign
the list $\bar{3}$ on all vertices $y_{n-3},\ldots, y_{n}$, where $\bar{i}=\{1,\ldots, 5\}\setminus \{i\}$. 
Suppose, to the contrary, that $G^2$ admits such a list coloring.
By the property of $G^2$, every vertex $x_i$ must be colored by $c_r$, where $i\stackrel{4}{\equiv}r\in \mathbb{Z}_4$ and $ \{c_1,c_2,c_3,c_4\}= \{2,3,4,5\}$.
Therefore, every vertex $y_{i-1}$ must be colored by the color $1$ or $c_r$, where $i\stackrel{4}{\equiv}r\in \mathbb{Z}_4$.
Assume that $c_{r_2} =2$ and $c_{r_3} =3$.
According to the list property, if $1\le i-1 \le n-4$ and $i\stackrel{4}{\equiv}r_2$, then every vertex $y_{i-1}$ must be colored by the color $1$. Likewise, if $n-4< i-1 \le n$ and $i\stackrel{4}{\equiv}r_3$, then $y_{i-1}$ must be colored by the color $1$.
This is a contradiction and the proof is completed.
}\end{proof}
In the following theorem, we introduce another family of bipartite cubic graphs which are planar and consequently they must have some cycles with size $4$. 
\begin{thm}\label{thm:bipartite:planar}
{There exists an infinite family of planar bipartite cubic graphs $G$ whose squares are not $1$-strongly chromatic-choosable.
}\end{thm}
\begin{proof}
{Let $n$ be a positive integer. Let $G$ be a cubic bipartite graph with vertices $x_i$ and $y_i$,
and edges $x_{i}x_{i+1}$, $x_iy_i$, $y_{i}y_{i+1}$ where $i\in \mathbb{Z}_n=\{1,\ldots, n\}$.
We claim that $\chi^1_\ell(G^2) > \chi(G^2)$ provided that $n$ is divisible by $4$.
By the definition, $G^2$ contains the edges of $G$ along with the new edges
$x_ix_{i+1}$, $x_ix_{i+ 2}$, and 
$y_iy_{i+ 1}$, $y_iy_{i+ 2}$, where $i\in \mathbb{Z}_n$.
Since $G^2$ has the clique number $4$, we must have $4 \le \chi(G^2)$ (any four vertices $x_i, y_i, x_{i+1}, y_{i+1}$).
Assume that $n$ is divisible by $4$.
Then we can find a $4$-coloring for $G^2$ by coloring 
any two vertices $x_{i}$ and $y_{i-2}$ by the same color $c_i \in \mathbb{Z}_4$ such that $c_i \stackrel{4}{\equiv}i\in \mathbb{Z}_n$.
Now, we are going to show that $\chi^1_\ell(G^2) > 4$ provided that $n\ge 12$.
We assign the list $\bar{4}$ on all vertices $x_i$ and $y_i$ with $8 \le i \le n$, 
assign the list $\bar{3}$ on all six vertices $x_i$ and $y_i$ with $i\in \{3,4,5\}$, 
assign the list $\bar{2}$ on all four vertices $x_1,x_2, y_6, x_7$, 
assign the list $\bar{1}$ on the remaining four vertices, $y_1,y_2, x_6, y_7$, 
where $\bar{i}=\{1,\ldots, 5\}\setminus \{i\}$. 

\begin{figure}[h]
 \centering
 \includegraphics[scale = 1.55]{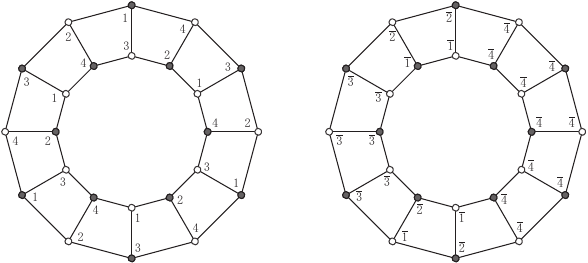}
 \caption{The square of the planar bipartite graph $P(4k, 1)$ is $4$-colorable (left) but not $4$-choosable (right)} 
\label{fig:CubicBipartitePlanar-24}
\end{figure}

Suppose, to the contrary, that $G^2$ admits such a list coloring $c:V(G)\rightarrow \mathbb{Z}_5$.
We may assume that $n=12$, because the color of $x_i$ (resp. $y_i$) must be repeated on $x_{i+4}$ (resp. $y_{i+4}$), provided that $8 \le i \le n-4$.
In addition, $c(x_{8}) = c(y_{10})= c(x_{12})$, $c(y_{8}) = c(x_{10})= c(y_{12})$, and 
$c(x_{9}) = c(y_{11})$, $c(y_{9}) = c(x_{11})$. 
Let $H$ be the bipartite induced subgraph of $G^2$ consisting of all vertices colored with the color $1$ or $2$.
According to the list property, it is not difficult to check that there is no integer $i$ such that
$\{c(x_i), c(y_i)\}= \{1,2\}$. 
So, by the symmetry property, we can assume that $x_3, y_4, y_5\in V(H)$.
Let $v\in \{x_1,x_2,y_1,y_2\}\cap V(H)$ and let $u\in \{x_6,y_6,x_7,y_7\}\cap V(H)$.
Since $H$ has no triangle, we must have $v\neq y_2$ and $u\neq y_6$.
We claim also that $v$ and $u$ do not have lists $\bar{2}$ and $\bar{1}$, respectively.
Otherwise, five vertices $v, x_3,y_4,y_5,u$ form a path in $H$ whose lists are $\bar{2}$, $\bar{3}$, $\bar{3}$, $\bar{3}$, and $\bar{1}$, respectively, which is impossible.
In addition, if $\{v,u\}=\{x_1,y_7\}$ or $\{v,u\}=\{y_1,x_7\}$, then $c(u)=c(v)\in \{c(x_9),c(y_9)\}$ which is impossible, because those two vertices have different lists $\bar{1}$ and $\bar{2}$.
On the other hand, if $\{v,u\}=\{x_1,x_7\}$ or $\{v,u\}=\{y_1,y_7\}$, then $c(u)\neq c(v)$ (and $\{c(u),c(v)\}= \{c(x_9),c(y_9)\}$ which is again impossible, because those two vertices the same list either $\bar{1}$ or $\bar{2}$.
Therefore, we have only two cases $\{v,u\}=\{y_1, x_6\}$ or $\{v,u\}=\{x_2, y_7\}$.
In both cases, it is not hard to verify that the color $3$ must appear at most $4$ times. 
On the other hand, every color $i$ must appear at most $n_i$ times, where $n_1=n_2=5$, $n_3=n_4=4$, $n_5=6$.
Therefore, every color $i$ must appear exactly $n_i$ times, because $G$ contains $24$ vertices.
This can imply that $c(x_1)=c(y_3)=c(x_5)=c(y_7)=4$ and hence $c(y_2)=c(x_4)=c(y_6)=c(x_8)=c(y_{10})=c(y_{12})=5$.
Thus if $\{v,u\}=\{y_1, x_6\}$, then one can conclude that $c(x_7) = 3$, and 
similarly if $\{v,u\}=\{x_2, y_7\}$, then $c(y_1) = 3$.
In both cases, the color of $y_1$ and $x_7$ are different and not equal to $4$ (they must consequently have the same color of $y_{10}$), which can derive contradiction.
}\end{proof}
\begin{figure}[h]
 \centering
 \includegraphics[scale = 1.3]{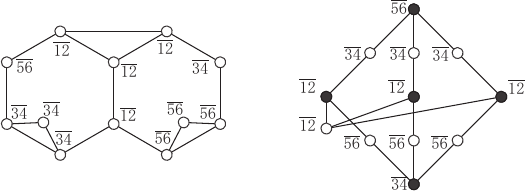}
 \caption{Smaller planar or biparite graphs whose square graphs are $4$-colorable but not $4$-choosable} 
\label{fig:Smaller-planar-or-bipartite}
\end{figure}
%
%
%
\section{Solution to Problems~\ref{prob:Kim- Kwon-Park-2015} and~\ref{prob:Dai-Wang-Yang-Yu-2018}}
\subsection{Line graph of subdivision of prisms and complete graphs}
\label{sec:subdivision}
The List Total Coloring Conjecture \cite{Borodin-Kostochka-1997} says that the square of subdivision graphs are chromatic-choosable. In this subsection, we shall show that there are some subdivision graphs for which the square graph of their line graphs are not chromatic-choosable.
\begin{thm}\label{thm:claw-free:cubic:prism}
{There exists an infinite family of claw-free planar cubic graphs $G$ whose squares are not $1$-strongly chromatic-choosable.
In particular, $G=L(S(P(4k,1)))$.
}\end{thm}
\begin{proof}
{Let $n$ be a positive integer divisible by $4$. Let $G$ be the claw-free cubic graph with vertices $x_{i, j}$, $y_{i, j}$, $z_{i, j}$ for which $j\in \{1,2\}$
and edges $x_{i,1}x_{i,2}$, $y_{i,1}y_{i,2}$, $z_{i,1}z_{i,2}$, and $z_{i,1}x_{i,1}$, $z_{i,1}x_{i,2}$, $z_{i,2}y_{i,1}$, $z_{i,2}y_{i,2}$, and also
$x_{i,2} x_{i+1,1}$, $y_{i,2} y_{i+1,1}$, where $i\in \mathbb{Z}_n=\{1,\ldots, n\}$.
Obviously, $G$ is a planar graph of order $6n$ and in particular, $G=L(S(P(n,1)))$.
We claim that $\chi^1_\ell(G^2) > \chi(G^2)$.
Since $G^2$ has the clique number $4$, we must have $4 \le \chi(G^2)$ (for example, any four vertices $x_{i,1}, x_{i,2}, z_{i,1}, z_{i,2}$).
Then we can find a $4$-coloring for $G^2$ by coloring 
any six vertices $z_{i,1}$, $y_{i+1,1}$, $x_{i+1,2}$, $z_{i+2,2}$, $x_{i+3,1}$, $y_{i+3,2}$ by the same color $c_i \in \mathbb{Z}_4$ such that $c_i \stackrel{4}{\equiv}i\in \mathbb{Z}_n$.
Now, we are going to show that $\chi^1_\ell(G^2) > 4$. Let
$C$ be the $8$-cycle $x_{n-1, 2} x_{n, 1} z_{n, 1}z_{n, 2} y_{n, 1} y_{n-1, 2}z_{n-1, 2}z_{n-1, 1}$ of $G$.
We assign the list $\bar{1}$ to all vertices except the vertices in $V(C)$; see Figure~\ref{SubdivisionPrism} (right part).
We assign the list $\bar{2}$ to the four vertices in $V_2=\{x_{n-1, 2}, x_{n, 1}, y_{n-1, 2}, y_{n, 1}\}$, 
and assign the list $\bar{3}$ to the four vertices in $V_3=\{z_{n-1, 1}, z_{n-1, 2}, z_{n, 1},z_{n, 2}\}$,
where $\bar{i}=\{1,\ldots, 5\}\setminus \{i\}$;
see Figure~\ref{SubdivisionPrism} for the smallest case $n=4$.

\begin{figure}[h]
 \centering
 \includegraphics[scale = 1.7]{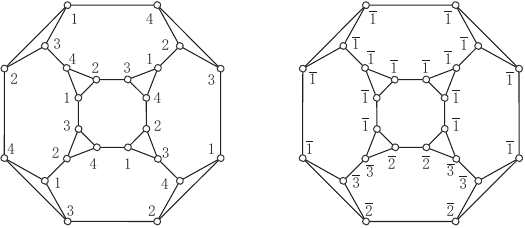}
 \caption{The square of the planar line graph $L(S(P(4,1)))$ is $4$-colorable (left) but not $4$-choosable (right)} 
\label{SubdivisionPrism}
\end{figure}

Suppose, to the contrary, that $G^2$ admits such a list coloring $c:V(G)\rightarrow \mathbb{Z}_5$.
Let $c_1,\ldots, c_4$ be the colors of the vertices $z_{1, 1}$, $x_{1, 1}$, $z_{1, 2}$, $x_{1, 2}$, respectively.
Since these vertices form a clique of size four, these colors must be distinct and so $\{c_1,\ldots, c_4\}=\{2,\ldots, 5\}$.
It is not to difficult to show that all vertices in 
$\{z_{i,1}, y_{i+1,1}, x_{i+1,2}, z_{i+2,2}, x_{i+3,1}, y_{i+3,2}\}\setminus V(C)$ receive the same color $c_i$. 
In fact, we can determine uniquely the colors of the vertices $\{x_{i, 1}, x_{i, 2}, y_{i, 1}, y_{i, 2}, z_{i, 1},z_{i, 2}\}\setminus V(C)$ by induction on $i$.
Therefore, one can also conclude that the color of every vertex $v\in C_i$ lies in the set $\{c_i, 1\}$, where 
 $C_1=\{z_{n-1, 2}, x_{n, 1}\}$, 
$C_2=\{x_{n-1, 2}, z_{n, 2}\}$, 
$C_3=\{z_{n-1, 1}, y_{n, 1}\}$, and
$C_4=\{y_{n-1, 2}, z_{n, 1}\}$.
Assume that $c_{a_2}=2$ and $c_{a_3}=3$. 
Then the color $1$ must appear on the unique vertex in $V_i\cap C_{a_i}$ with the given list $\overline{i}$, where $i\in \{2,3\}$.
This distribution of the color $1$ is possible only if $a_1=a_2$ according to the graph structure of $C^2$. 
This is contradiction, as desired.
}\end{proof}
The smallest graph introduced in Theorem~\ref{thm:claw-free:cubic:prism} contains $24$ vertices and its smallest cycles, except triangles, have size $8$. In the following theorem, we introduce a smaller graph having only $12$ vertices but its smallest cycles, except triangles, have size $6$. 
\begin{thm}\label{thm:claw-free:cubic}
{There exists a claw-free planar cubic graph $G$ of order $12$ whose square is not $1$-strongly chromatic-choosable.
In particular, $G=L(S(K_{4})))$.
}\end{thm}
\begin{proof}
{Let $G$ be the graph shown in Figure~\ref{fig:12-cubic-claw-free}.
Since $G^2$ has the clique number $4$, we must have 
$\chi(G^2)\ge 4$.
To show the equality, it is enough to consider the $4$-coloring shown in Figure~\ref{fig:12-cubic-claw-free}.
We claim that $\chi^1_\ell(G^2) > \chi(G^2)$ which can complete the proof.
To show this, first consider the list assignment for vertices as Figure~\ref{fig:12-cubic-claw-free}, where
 $\overline{i}=\{1,2,3,4,5\}\setminus \{i\}$.
\begin{figure}[h]
 \centering
 \includegraphics[scale = 1.7]{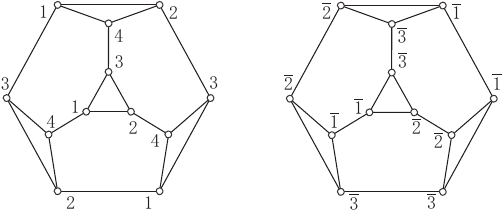}
 \caption{The square of the line graph $L(S(K_{4}))$ is $4$-colorable (left) but not $4$-choosable (right)} 
\label{fig:12-cubic-claw-free}
\end{figure}

Suppose, to the contrary, that $G^2$ admits such a list coloring.
It is easy to check that $G^2$ contains exactly four maximum independent sets $V_1,\ldots, V_4$ of size $3$ which are shown in Figure~\ref{fig:12-cubic-claw-free} (left part).
According to the list assignment, every $V_i$ contains all three lists $\overline{1}, \overline{2}$, and $\overline{3}$.
Thus each of the colors $1$, $2$, $3$ cannot appear three times.
 Therefore, two colors $4$ and $5$ must appear exactly three times and each of the 
colors $1$, $2$, $3$ must appear exactly two times.
After removing the vertices colored by $4$ and $5$ in the graph $G^2$,
the remaining graph is the union of two $4$-cycles having an edge in common. 
In particular, if we consider two mentioned cycles $w_0w_1v_0v_1$ and $w'_0w'_1v_0v_1$, then
 lists on the vertices of them must be 
$ \overline {c_1}, \overline {c_2}, \overline {c_3}, \overline {c_3}$
 and 
$ \overline {c_2}, \overline {c_1}, \overline {c_3}, \overline {c_3}$, respectively, where $c_1$, $c_2$, and $ c_3$ are a permutation of $1$, $2$, $3$. But this subgraph does not admit such a list coloring; for example, 
see the middle graph in Figure~\ref{smallest-non-chromatic-choosable-graph}. This is a contradiction, as desired.
Hence the proof is completed.
}\end{proof} 
We shall below generalize the graph of Theorem~\ref{thm:claw-free:cubic} to $5$-regular graphs using a little extra effort.
\begin{thm}\label{thm:5-regular}
{There exists a claw-free $5$-regular graph $G$ of order $30$ 
whose square is not $1$-strongly chromatic-choosable.
In particular, $G=L(S(K_{6}))$.
}\end{thm}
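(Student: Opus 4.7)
The plan is to parallel the proof of Theorem~\ref{thm:claw-free:cubic}, now with $G = L(S(K_6))$ and the enlarged colour universe $\{1,\ldots,7\}$. As $G$ is a line graph it is claw-free, and each edge of $S(K_6)$ joins a vertex of degree $5$ to a vertex of degree $2$, so $G$ is $5$-regular on $30$ vertices. I would identify $V(G)$ with the $30$ arcs $v \to u$ of $K_6$; a routine check shows that two arcs $(v,u)$ and $(v',u')$ are at distance at most $2$ in $G$ iff $v = v'$, $v = u'$, or $v' = u$. Colouring each arc by its head is therefore a proper $6$-colouring of $G^2$, while the five arcs out of a vertex $v$ together with any arc $u \to v$ form a clique of size $6$, so $\chi(G^2) = 6$. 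The same non-adjacency condition forces any independent set of $G^2$ to have distinct tails disjoint from its heads, so the maximum independent sets of $G^2$ have size exactly $5$ and are precisely the six sinks $V_u = \{v \to u : v \neq u\}$ for $u \in V(K_6)$.

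To bound $\chi_M(G^2)$ from below I would fix a $1$-factorization $M_1, \ldots, M_5$ of $K_6$ and assign to each arc $(v,u)$ the list $\bar{i} = \{1,\ldots,7\} \setminus \{i\}$, where $\{u,v\} \in M_i$. Since each $M_i$ covers a given vertex $u$ in exactly one edge, every sink $V_u$ contains exactly one vertex of each list $\bar{1}, \ldots, \bar{5}$. Supposing a proper list colouring existed, no colour $i \in \{1,\ldots,5\}$ could be used five times (its class would fill some sink $V_u$, which contains a vertex with list $\bar{i}$), so colour $i$ is used at most four times; colours $6$ and $7$ are each used at most five times; and since $5 \cdot 4 + 2 \cdot 5 = 30 = |V(G)|$, all of these bounds are tight. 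Hence colours $6$ and $7$ fill two distinct sinks $V_{u_6}, V_{u_7}$, and the remaining $20$ arcs, whose heads lie in the $4$-set $H := V(K_6) \setminus \{u_6, u_7\}$, partition into five independent sets of size four.

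The main obstacle is extracting a contradiction from this residual $20$-vertex structure, which plays the role of the two $4$-cycles sharing an edge in the cubic proof. Any size-$4$ independent set with heads in $H$ is either concentrated at a single head $h \in H$ (Case A: four arcs to $h$, with the missing fifth arc forced to be $(M_i(h), h)$ by the list constraint) or split between exactly two heads of $H$ (Case B). Since each $h \in H$ receives five incoming arcs that must all be coloured, a short book-keeping argument on the head-distribution $a_h^{(i)}$ of each colour rules out configurations with too many Case A colours. The surviving configurations are eliminated by inspecting the interaction of each matching $M_i$ with the four-element set $H$, according to whether $M_i$ pairs $u_6$ with $u_7$ or not; this finite case analysis on the $1$-factorization is the \emph{little extra effort} foreshadowed by the paper beyond the cubic case.
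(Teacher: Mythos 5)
Your setup is correct and, up to relabelling, you use the very same list assignment as the paper (lists induced by a one-factorization of $K_6$, with a seven-colour universe). Your route, however, genuinely diverges from the paper's: you argue through the maximum independent sets of $G^2$ (the six ``sinks'' of size $5$) and a global counting argument, in the spirit of the paper's proof for $L(S(K_4))$, whereas the paper's own proof of this theorem works with the six $5$-cliques $V_i=\{v_{i,j}: j\neq i\}$ and an auxiliary multigraph $C$ on the seven colours (one edge per clique, joining the two colours missing from that clique), shows $C$ is a tree, then a path whose endpoints are the two unrestricted colours, and extracts the contradiction from the Latin-square structure along that path. Your counting step is sound: each colour in $\{1,\dots,5\}$ is used at most four times, colours $6$ and $7$ at most five times each, and $5\cdot 4+2\cdot 5=30$ forces equality everywhere, so colours $6$ and $7$ occupy two sinks $V_{u_6},V_{u_7}$ and the remaining five colour classes are independent $4$-sets with heads in $H=V(K_6)\setminus\{u_6,u_7\}$.

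The problem is that the proof stops there. The entire contradiction still has to be extracted from that residual $20$-vertex configuration, and your final paragraph only names a plan (``a short book-keeping argument on the head-distribution'', ``finite case analysis on the $1$-factorization'') without carrying it out. This is not a routine verification: it is the analogue of the paper's tree-and-path analysis, which occupies most of its proof, and the corresponding endgame in the cubic case is short only because the residual graph there has six vertices, not twenty. Until you actually enumerate the admissible Case A/Case B head-distributions (for instance, first pinning down how many Case A colours have their missing tail $M_i(h)$ in $\{u_6,u_7\}$, then eliminating each surviving configuration according to whether $M_i$ pairs $u_6$ with $u_7$) and exhibit a contradiction in every case, the theorem is not proved. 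The approach looks viable --- the paper shows that no colouring exists for exactly this list assignment --- but as written the crux of the argument is missing.
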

\begin{proof}
{Let $G$ be a graph with vertex set 
$V(G)=\{v_{i,j}: i,j\in \mathbb{Z}_6, i\ne j\}$ and edge set 
$E(G)=\{v_{i,j}v_{j,i}: i,j\in \mathbb{Z}_6, i\ne j\}\cup \{v_{i,j}v_{i,j'}: i,j,j'\in \mathbb{Z}_6, j'\ne i\ne j\neq j'\}$, where $\mathbb{Z}_6=\{1,\ldots, 6\}$.
Note that $G=L(B(K_{6}))$.
Since $G^2$ has the clique number $6$, we must have $6\le \chi(G^2)$. 
 To prove the equality, we can color every vertex $v_{i,j}$ with the color $j$; for example, see Figure~\ref{FiveRegularGraph}.
Now, we shall prove that $\chi^1_\ell(G^2) \neq 6$.
Let $L:\{1,\ldots, 6\}\times \{1,\ldots, 6\} \rightarrow \{0,\ldots, 5\}$ be a Latin square obtained from a one-factorization of the complete graph $K_6$.
More precisely, every edge $x_ix_j$ in the complete graph must have the color $L(i,j)\in \{1,\ldots, 5\}$ (consequently, $L(i,i) =0$). 
We assign the color list $\overline {L(i,j)}= \{0,\ldots, 6\}\setminus \{L(i,j)\}$ to each vertex $v_{i,j}$.
For example, a list assignment is shown in Figure~\ref{FiveRegularGraph}.

 Suppose, to the contrary, that $G^2$ admits such a list coloring. 
For each $i$ with $1\le i\le 6$, we set $V_i=\{v_{i,j}: j\in \mathbb{Z}_6, i\ne j\}$.
Let $C$ be the multigraph with vertex set $V(C) = \{0, \ldots, 6\}$ for which edge set
consists of edges $ij$ such that both colors $i$ and $j$ do not appear on all vertices of a fixed partition $V_t$.
Clearly, $C$ has size $6$.
We first claim that $C$ is a tree. 
Suppose, to the contrary, that $C$ has a cycle $c_1,\ldots, c_n$.
Let $V_{p_i}$ be the partition corresponding to the edge $c_{i-1}c_{i}$, where $i\in \{1,\ldots, n\}$; we here consider $c_0$ for $c_n$.
Since the vertex $v_{p_n, p_i}$ is adjacent to all vertices of $V_{p_i}$, the color of it must be either $c_{i-1}$ or $c_{i}$.
In particular, the color of $v_{p_n, p_1}$ must be either $c_n$ or $c_1$.
Hence it must be $c_1$, because of $v_{p_n, p_1}\in V_{p_n}$.
 Thus one can inductively conclude that the color of 
$v_{p_n, p_i}$ must be $c_{i}$, since vertices $v_{p_n, p_i}$ and $v_{p_n, p_{i-1}}$ are adjacent.
This implies that the color of $v_{p_n, p_{n-1}}$ must be $c_{n-1}$ which is impossible, because of $v_{p_n, p_{n-1}}\in V_{p_n}$.
Now, we claim that every vertex $i\in V(C)$ satisfying $1\le i\le 5$ must have degree exactly two.
Otherwise, there is a color $i_0\in V(C)$ with $1\le i_0\le 5$ such that appears on all partitions except a fixed partition $V_t$.
Therefore, this color must appear on all vertices $v_{j, t}$ where $j\in Z_6$ and $j\neq t$
(in fact, every vertex $v_{j, t'}$ is adjacent to all vertices of $V_{t'}$ and so those have different colors).
On the other hand, these vertices have all lists $\bar{1}, \ldots,\bar{5}$, which is impossible.
Therefore, we can assume that $C$ is a path with vertices $c_{0},c_{1},\ldots, c_{6}$.

Let $V_{p_i}$ be the partition corresponding to the edge $c_{i-1}c_{i}$, 
where $i\in \mathbb{Z}_6$.
As the above-mentioned argument, one can inductively conclude that the color of $v_{p_i, p_j}$ must be $c_{j}$ whenever $i < j$, and $c_{j-1}$ whenever $j < i$ (the argument for a fixed integer $i$ is based on two inductions, the first one uses the integer $j$ starting from $i+1$ to $6$ and the other one uses the integer $j$ starting from $i-1$ to $0$). 
Therefore, the color $c_0$ must appear on all vertices $v_{p_i, p_1}$ with $2\le i \le 6$.
Since these vertices have all lists $\bar{1}, \ldots,\bar{5}$, one can conclude that $c_0\in \{0,6\}$.
Likewise,
 the color $c_6$ must appear on all vertices $v_{p_i, p_6}$ with $1\le i\le 5$, and hence $c_6\in \{0,6\}$.
Furthermore, the color $c_1$ must appear on all vertices 
$v_{p_i, p_2}$ with $3\le i \le 6$.
This implies that $c_1=L(p_1,p_2)$.
Likewise, we must have $c_5=L(p_6,p_5)$.
Therefore, one can similarly conclude that $c_2\in \{L(p_1,p_3), L(p_2,p_3)\}$ and
 $c_4\in \{ L(p_5,p_4), L(p_6,p_4)\}$.
Since the color $c_4$ (resp. $c_5$) is a perfect matching of $K_6$, it must be in $\{ L(p_1,p_3), L(p_2,p_3)\}$. This is a contradiction because $c_2$, $c_4$, and $c_5$ are three different colors.
Hence the proof is completed.
}\end{proof}
\begin{figure}[h]
 \centering
 \includegraphics[scale = 0.9]{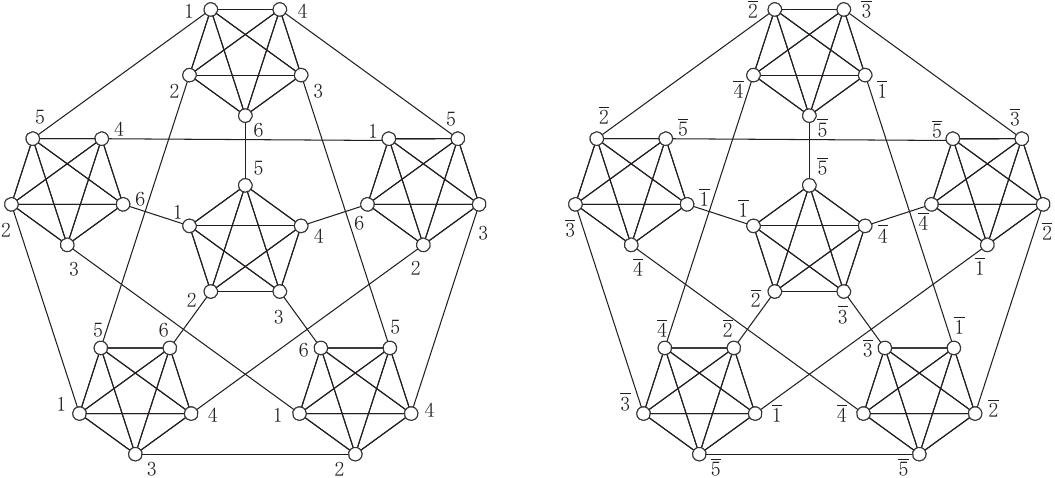}
 \caption{The square of the line graph $L(S(K_{6}))$ is $6$-colorable (left) but not $6$-choosable (right)} 
\label{FiveRegularGraph}
\end{figure}
\subsection{Line graph of generalized Petersen graphs}
Kim and Park (2015)~\cite{Kim-Park-2015} introduced a small graph of order $15$, with degrees $3$ and $4$, whose square graph is not chromatic-choosable; the right graph illustrated in Figure~\ref{fig:15-Petersen-KimPark}.
We observed that line graph of the Petersen graph is another small graph having the same square graph.
More precisely, their square graph are the multipartite graph whose parts have size three and so it must have the chromatic number and the list chromatic number $5$ and $7$, see \cite{Kierstead-2000} (note that there are some other strongly regular graphs whose line graphs are complete multipartite; for example, Clebsch graph).
\begin{figure}[h]
 \centering
 \includegraphics[scale = 1.52]{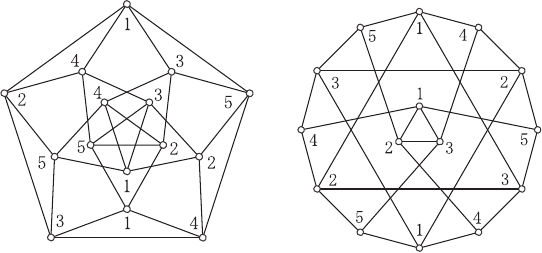}
 \caption{The line graph of the Petersen graph (left) and the Kim-Park graph (right) have the same square graph: complete multipartite graph with part size $3$ and chromatic number $5$.}
 \label{fig:15-Petersen-KimPark}
\end{figure}
In the following theorem, we are going to show that the square graph of the line graph of a bipartite cubic graph is not necessarily chromatic-choosable (using generalized Petersen graphs). We have already observed that the square graph of a bipartite cubic graph is not necessarily chromatic-choosable.
\def\x {1}
\def\y {2}
\def\z {3}
\begin{thm}\label{thm:line:generalized:P:bipartite}
{There exists an infinite family of claw-free $4$-regular graphs $G$ whose squares are not $1$-strongly chromatic-choosable.
In particular, $G$ is the line graph of a bipartite cubic graph.
}\end{thm}
\begin{proof}
{Let $P$ be a generalized Petersen graph $P(n, 3)$ 
with vertices $v_i$ and $u_i$ and 
edges $v_iv_{i+1}$, $v_iu_i$, and $u_iu_{i+3}$
where $i\in \mathbb{Z}_n=\{1,\ldots, n\}$ and $n$ is divisible by $5$. Note that $P$ is bipartite when $n$ is divisible by $10$.
Let $G$ be the line graph of $P$.
For notational simplicity, we use $x_i$, $y_i$ and $z_i$ in $V(G)$ corresponding to the edges 
$v_iv_{i+1}$, $v_iu_i$, and $u_iu_{i+3}$.
It was known that $\chi(G^2)=5$, see~\cite[Theorem 4.2]{Yang-Wu-2018}.
To see this it is enough to color any triple of vertices $x_i$, $y_{i+3}$, and $z_{i+4}$ by the same color $r$ where 
$i\stackrel{5}{\equiv}r\in \mathbb{Z}_5$.
Now, we claim that $\chi^1_{\ell}(G^2) > \chi(G^2)$. 
One method for proving it is to assign the same list $\bar{\x}$ on all $x_i$, and
assign the same list $\bar{\y}$ on all $y_i$, 
assign the list the same list $\bar{\z}$ on the remaining vertices, where $\bar{j}=\mathbb{Z}_6\setminus \{j\}$; for example, see Figure~\ref{20P10-2}. 
Unfortunately, this method only works whenever $n$ is not divisible by $4$ and the proof needs a little extra effort (we leave the proof for the reader). 
To introduce a simpler proof for our purpose, let us change the list assignments and assume that $n$ is at least $15$.
We assign the list $\bar{\x}$ on all $x_i$ with $n-8\le i\le n$,
assign the list $\bar{\y}$ on all $y_i$ with $n-8\le i\le n$,
and assign the list the list $\bar{\z}$ on the remaining vertices, where $\bar{j}=\mathbb{Z}_6\setminus \{j\}$.

Suppose, to the contrary, that $G^2$ admits such a list coloring $c:V(G)\rightarrow \mathbb{Z}_6$.
Let $i$ be an integer with $1\le i\le n-8$. 
We denote the colors of the vertices $x_i$, $x_{i+1}$, $y_i$, $y_{i+1}$, and $y_3$ by $a_0, a_1, b_0, b_1$, respectively.
Since these colors are different,
we can denote by $b_2$ the unique color in $\mathbb{Z}_6\setminus \{a_0, a_1, b_0, b_1,\z\}$.
If $i\le n-9$, then according to the list assignments, we must have $\{c(x_{i+2}),c(y_{i+2})\}=\{b_0,b_2\}$.
This implies that $\{c(x_{i+3}),c(y_{i+3})\}=\{b_1,a_0\}$ and so $\{c(z_{i+3}),c(z_{i})\}=\{a_1,b_2\}$.
Therefore, $c(x_{i+2})=b_0$ and $c(y_{i+2})\}=b_2$.
Let call the color of vertices $x_1$, $x_2$, $x_3$, $y_2$, and $y_3$ by $p_1,\ldots, p_5\in \mathbb{Z}_6\setminus \{3\}$.
By the argument mentioned above, one can conclude that for every $i$ with $1 \le i < n-7$, we must have
 $c(x_i)=p_j$ in which $i\stackrel{5}{\equiv}j\in \mathbb{Z}_5$, 
and $c(y_i)=p_j$ in which $i\stackrel{5}{\equiv}j-2\in \mathbb{Z}_5$.
Moreover, if $i=n-7$ then $\{c(x_{i}),c(y_{i})\}=\{p_j,p_{j'}\}$ where $i\stackrel{5}{\equiv}j\in \mathbb{Z}_5$ and $i\stackrel{5}{\equiv}j'-2\in \mathbb{Z}_5$.
Therefore, if $2 \le i \le n-10$, then $c(z_i)=p_j$ in which $i\stackrel{5}{\equiv}j-1\in \mathbb{Z}_5$.
Consequently, this equality holds for every $i\in \{1, n-9,n-8,n-7,n-2,n-1,n\}$ and then for all $i\in \{1,\ldots, n\}$.
These imply that 
for every $i$ with $n-6 \le i \le n$, we must have
 $c(x_i)\in \{p_j,3\}$ in which $i\stackrel{5}{\equiv}j\in \mathbb{Z}_5$, 
and $c(y_i')\in \{p_{j'},3\}$ in which $i'\stackrel{5}{\equiv}j'-2\in \mathbb{Z}_5$.
Thus if $p_j=1$, then $c(x_i)=3$, and if $p_{j'}=2$, then $c(y_{i'})=3$.
This is impossible because there are two adjacent vertices $x_i$ and $y_{i'}$ in $G^2$ with this properties.
}\end{proof}
\begin{figure}[h]
 \centering
 \includegraphics[scale = 1.5]{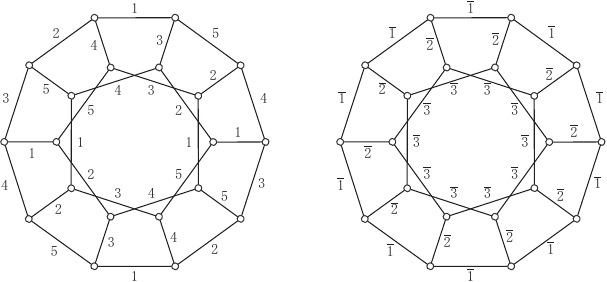}
 \caption{The square of the $4$-regular planar line graph $L(P(10,2))$ is $5$-colorable (left) but not $5$-choosable (right)} 
\label{20P10-2}
\end{figure}
In the following theorem, we are going to show that the square of the line graph of a planar cubic graph is not necessarily chromatic-choosable (using generalized Petersen graphs). We have already observed that the square of a planar cubic graph is not necessarily chromatic-choosable. This result answers a question by Cranston (2023) \cite[Page 9]{Cranston-2023} as well.
\begin{thm}\label{thm:line:generalized:P:planar}
{There exists an infinite family of planar claw-free $4$-regular graphs $G$ 
whose squares are not $1$-strongly chromatic-choosable. In particular, $G$ is the line graph of a planar cubic graph.
}\end{thm}
\begin{proof}
{Let $P$ be a generalized Petersen graph $P(n, 2)$ with vertices $v_i$ and $u_i$ and edges $v_iv_{i+1}$, $v_iu_i$, $u_iu_{i+2}$, 
where $i\in \mathbb{Z}_n=\{1,\ldots, n\}$ and $n$ is divisible by $5$. 
Let $G$ be the line graph of $P$. Note that $P$ and $G$ are planar when $n$ is divisible by $10$.
For notational simplicity, we use $x_i$, $y_i$ and $z_i$ in $V(G)$ corresponding to the edges 
$v_iv_{i+1}$, $v_iu_i$, and $u_iu_{i+2}$.
It was known that $\chi(G^2)=5$, see~\cite[Theorem 3.1]{Yang-Wu-2018}.
To see this, it is enough to color any triple of vertices $x_i$, $y_{i+3}$, and $z_{i+2}$ by the same color $r$ where $i\stackrel{5}{\equiv}r\in \mathbb{Z}_5$.

Now, we are going to show that $\chi^1_\ell(G^2) > \chi(G^2)$. 
We assign the list $\bar{1}$ on all $x_i$,
assign the list $\bar{2}$ on all $y_i$,
and assign the list the list $\bar{3}$ on the remaining vertices, where $\bar{j}=\mathbb{Z}_6\setminus \{j\}$.
Suppose, to the contrary, that $G^2$ admits such a list coloring $c:V(G)\rightarrow \mathbb{Z}_6$.
First assume that colors of any two vertices $y_{i}$ and $z_{i-1}$ are the same.
Let contract any pair of such vertices and call the resulting induced subgraph with these new vertices by $H$. If we consider the coloring for $H$ obtained from $G$, this coloring uses at most four colors of
$\mathbb{Z}_6\setminus \{2,3\}$. This is a contradiction, since $H$ contains some cliques with size $5$.

Now, we may assume that there exists an integer $i$ such that all colors of the vertices 
$x_{i},y_{i},z_{i}
,x_{i+1},y_{i+1},
 y_{i+2}$ are different.
We denote them by $a_1,\ldots, a_6$, respectively.
Hence $c(x_{i+2})=a_2$ and $c(x_{i-1})=a_6$.
Since $c(z_{i+2})\in \{ a_1,a_5\}$ and $c(z_{i-2})\in \{ a_4,a_5\}$, 
according to the symmetry property of $G$, 
we may assume that $c(z_{i+2})= a_1$ (note that $z_{i+2}$ and $z_{i-2}$ are adjacent).
In addition, it is easy to check that 
$c(x_{i+3})\in \{a_3,a_5\}$, 
$c(y_{i+3})\in \{a_1,a_3\}$, 
$c(y_{i+4})\in \{a_4,a_5\}$, 
and
$c(z_{i+1})\in \{a_3,a_6\}$.
If $c(z_{i+1})=a_3$, then we must have
$c(x_{i+3})=a_5$,
$c(y_{i+3})=a_1$, and
$c(z_{i+2})=a_4$.
These imply that $1,2\not \in \{a_1,\ldots, a_6\}\setminus \{a_3\}$ which is a contradiction.

Therefore, $c(z_{i+1})=a_6$.
On the other hand, according to the list property, $1\in \{a_3,a_5\}$.
We shall consider the following two cases.
Case (1): $a_5=1$.
In this case, we must have
 $c(x_{i+3})=a_3$,
 $c(y_{i+3})=a_1$, 
and 
$c(z_{i+3})=a_4$ which is again a contradiction.
These imply that $3\not \in \{a_1,\ldots, a_6\}\setminus \{a_5\}$ which is a contradiction.
Case (2): $a_3=1$.
 In this case, 
$c(x_{i+3})=a_5$
If $c(y_{i+3})=a_3=1$, then one can derive a contradiction similarly to Case (1).
Thus
 $c(y_{i+3})=a_1$, 
$c(x_{i+3})=a_5$, and
$c(y_{i+4})=a_4$.
These imply that $2\not \in \{a_1,\ldots, a_6\}\setminus \{a_3\}$ which is again a contradiction.
Hence the proof is completed.
}\end{proof}
Mirzakhani (1996)~\cite{Mirzakhani-1996} constructed a $3$-colorable planar graph $G$ satisfying $\chi^1_\ell(G) \ge \chi(G) +2$. We would like to know whether there are such square graphs obtained from planar graphs.
\begin{prob}\label{prob:2constant}
{Is there a planar graph $G$ satisfying $\chi^1_\ell(G^2) \ge \chi(G^2) +2$?
}\end{prob}
\subsection{Line graph of bipartite planar graphs with large maximum degree}

In this subsection, we are going provide some counterexamples to Conjecture~\ref{conj:Kostochka-Woodall-2001} (and Problem~\ref{prob:Dai-Wang-Yang-Yu-2018}) which are line graph of bipartite planar graphs with sufficiently large maximum degree.
For this purpose, we need to establish the following result which generalizes an exceptional graph in \cite{Zhu-Zhu-2022}.
\begin{thm}\label{thm:gap:largemaximumdegree}
{Let $k$ be a positive integer with $k\ge 4$, and let $a, b, t$ be three positive integers such that $k$ is divisible by $2a$ and $2b$.
Let $G$ be the graph $\overline{K_{t}} \vee (K_{k-a}\cup K_{k-b})$.
If $t\ge \binom{2a}{a}\binom{2b}{b}$, then $G$ is not $k$-choosable.
}\end{thm}
\begin{proof}
{Let $A$, $B$, and $T$ be three disjoint subsets of $V(G)$ with sizes $k-a$, $k-b$, and $t$ corresponding to the vertex sets of three subgraphs $\overline{K_{t}}$, $K_{k-a}$, and $ K_{k-b}$ in $G$.
Let $c_1,\ldots, c_k$ and $c'_1,\ldots, c'_k$ be $2k$ different colors. 
First, assume that $k$ is divisible by $2a$ and $2b$.
We assign the list $\{c_1,\ldots, c_{k}\}$ to all vertices of $A$, and assign the list $\{c'_{1},\ldots, c'_{k}\}$ to all vertices of $B$.
We spilt the first list into $2a$ disjoint subsets $S_1,\ldots, S_{2a}$ with the same size $k/2a$, 
and we spilt the second list into $2b$ disjoint subsets $S'_1,\ldots, S'_{2b}$ with the same size $k/2b$.
Let $\mathcal{I}_a $ be the set of all subsets of $\{1,\ldots, 2a\}$ having size $a$ and 
let $\mathcal{I}_b $ be the set of all subsets of $\{1,\ldots, 2b\}$ having size $b$.
For any pair $I_a,I_b$ with $I_a\in \mathcal{I}_a$ and $I_b\in \mathcal{I}_b$, 
we consider the list $L_{I_a,I_b}$ of size $k$ which is the union of 
$\cup _{i\in I_a}S_i$ and $\cup_{i\in I_b}{S'_i}$. 
We assign this list to one vertex of $T$. 
Sine $|T|\ge \binom{2a}{a}\binom{2b}{b}$, any list of this type can appear on at least one vertex of $T$. 
For the remaining vertices of $T$, we assign arbitrary lists with size $k$ to them.
Suppose, to the contrary, that $G$ admits such a list coloring. 
Since $|A|=k-a$ and $G[A]$ a complete graph, there are exactly $a$ colors $c_{i_1},\ldots, c_{i_a}$ from $\{c_1,\ldots, c_{k}\}$ which did not appear on vertices of $A$. Let $J$ be the set of all $j$ such that $S_j$ does not have any color $c_{i_s}$.
Since $|B|=k-b$ and $G[B]$ is a complete graph, there are exactly $b$ colors $c'_{i'_1},\ldots, c'_{i'_b}$ from $\{c'_1,\ldots, c'_{k}\}$ which did not appear on vertices of $B$. Let $J'$ be the set of all $j$ such that $S'_{j'}$ does not have any color $c_{i'_s}$.
Since $|J|\ge a$ and $J'\ge b$, there exists a pair $I_a, I_b$ satisfying $\mathcal{I}_a \ni I_a\subseteq J$ and $\mathcal{I}_b \ni I_b\subseteq J'$.
Consider the vertex having the list $L$ corresponding to this pair $I_a,I_b$.
This vertex receives a color different from all colors $c_{i_1},\ldots, c_{i_a}$ and $c'_{i'_1},\ldots, c'_{i'_b}$ which is a contradiction. 
Hence the proof is completed.
}\end{proof}
The following corollary shows that the upper bound on $|\Delta(G)|$ in Conjecture~\ref{intro:conj:small-graphs}
 is sharp and cannot be improved by one. (Note that when $k$ is odd and $k\ge 5$, the graph $\overline{K_{5}} \vee (2K_{k-1})$ is not $k$-choosable according to Theorem~\ref{thm:sharpness:conj:t-strongly}). 
\begin{cor}{\rm (\cite{Zhu-Zhu-2022})}\label{cor:2k-2:maximum-degree}
{If $k$ is positive even integer with $k\ge 4$, then the graph $\overline{K_{4}} \vee (2K_{k-1})$ is not $k$-choosable. As a consequence, there exists a $k$-colorable non-$k$-choosable graph $G$ satisfying $\Delta(G)= 2k-2$ and $|V(G)| = 2k+2$.
}\end{cor}
\begin{proof}
{Apply Theorem~\ref{thm:gap:largemaximumdegree} with $a=b=1$ and $t=4=\binom{2a}{a}\binom{2b}{b}$.
}\end{proof}
The following corollary shows that the upper bound on $|V(G)|$ in Conjecture~\ref{intro:conj:small-graphs} 
cannot be larger than $2k+8$ in general. It would be an interesting determine best possible upper bounds.
\begin{cor}\label{cor:2k+9:maximum-degree}
{If $k$ is a positive integer divisible by $4$ and $k\ge 16$, then there exists a $k$-colorable non-$k$-choosable graph $G$ satisfying $\Delta(G)= 2k-3$ and $|V(G)| = 2k+9$.
}\end{cor}
\begin{proof}
{Apply Theorem~\ref{thm:gap:largemaximumdegree} with $a=2$ and $b=1$, and $t=\binom{2a}{a}\binom{2b}{b}=12$.
}\end{proof}
Kim and Park (2015)~\cite{Kim-Park-2015} showed that for square of graphs, the gap between chromatic number and list chromatic number can be arbitrary large. In the following corollary, we introduce a new family of such graphs having bounded maximum degree obtained from line graph of bipartite planar graphs.
\begin{cor}\label{cor:square:graph:maximum-degree-chromatic}
{If $n$ is a positive integer and $k$ is a sufficiently large integer divisible $2n$, there exists a line graph $G$ such that $G^2$ is
 $(k-n+1)$-colorable and non-$k$-choosable graph. In particular, $G$ is the line graph of a bipartite planar graph and $\Delta(G^2) \le 2k-2n$.
}\end{cor}
\begin{proof}
{Let $H_1$ and $H_2$ be two stars with size $k-n$.
 Let $v_1,\ldots, v_t$ be $t$ leaf vertices of $H_1$ and let $w_1,\ldots, w_t$ 
be $t$ leaf vertices of $H_2$. Add new edges $v_iw_i$ to these graphs and call the resulting bipartite planar graph $H$.
Let $G$ be the line graph of $H$. Obviously $G^2$ contains the graph $\overline{K_{t}} \vee (K_{k-n}\cup K_{k-n})$ as a subgraph. Thus by Theorem~\ref{thm:gap:largemaximumdegree}, this graph is not $k$-choosable provided that $t\ge \binom{2n}{n} ^2$.
It is easy to check that this graph is also $(k-n+1)$-colorable, see Figure~\ref{fig:Line_Planar_Bipartite}, 
and also $\Delta(G^2) \le 2k-2n$ provided that $t \le k-n$.
}\end{proof}
\begin{figure}[h]
 \centering
 \includegraphics[scale = 1.6]{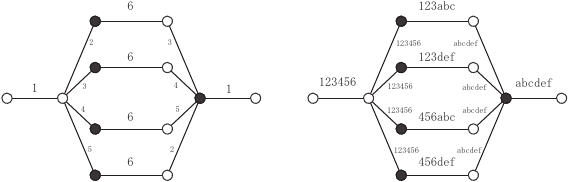}
 \caption{The square of the line graph $L(H)$ is $6$-colorable (left) but not $6$-choosable (right)}
\label{fig:Line_Planar_Bipartite}
\end{figure}
Finally, we propose the following problem to investigate a stronger version for Conjecture~\ref{intro:conj:small-graphs}.
\begin{prob}
{Let $k$ be a positive integer. For every positive integer $i$, what is the maximum number $c(i, k)$ such that if $G$ is a $k$-colorable graph satisfying $\Delta(G)\le 2k-i$ and $|V(G)| \le 2k+c(i,k)$, then $G$ is $k$-choosable?
}\end{prob}
%
%
\section{A revised version to the List Square Coloring Conjecture}

As we already observed, total graphs are square graphs of a class of bipartite graphs.
It is easy to see that the following relation also holds for this family of graphs. 
 $$ \frac{1}{2}\Delta(T(G)) +1\le\Delta(G) +1\le \chi(T(G)).$$
The square of our bipartite graph examples violated this property.
We feel that by replacing this condition, the List Square Coloring Conjecture conjecture can be revised to the following version.
According to the graph construction in Theorem~\ref{thm:bipartite:planar}, 
the lower bound is sharp and cannot be improved by $1/2$. 
\begin{conj}\label{conj:revised}
{If $G$ is a bipartite graph and $\chi(G^2) \ge \frac{1}{2}\Delta(G^2)+1$, then $G^2$ is $1$-strongly chromatic-choosable.
}\end{conj}
One may ask whether this conjecture can be restated for all graphs. If yes, one can easily conclude a weaker version of the List Coloring Conjecture, since $ \frac{1}{2}\Delta(L(G)) +1\le \Delta(G)\le \chi(L(G))$. More generally, for claw-free graphs, one can easily prove that
$ \chi(G) \ge \frac{1}{2}\Delta(G) +1$ (any color around each vertex must appear at most twice). Unfortunately, by the following graph construction, the answer is negative.
\begin{thm}\label{thm:sharpness:conjecture}
{For every integer $k$ with $k\ge 3$, there exists a regular graph $G$ of order $4k-2$ with chromatic number $k$ satisfying $\chi^1_\ell(G) >\chi(G)= \frac{1}{2}\Delta(G)+1$.
}\end{thm}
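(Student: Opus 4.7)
The plan is to construct, for each $k \ge 3$, an explicit graph $G_k$ with $\chi(G_k) = k$, $\Delta(G_k) = 2(k-1)$, and $\chi_\ell(G_k) \ge k+1$. My starting point is a non-$t$-choosable gadget $H$ of low chromatic number, such as the Erd\H{o}s--Rubin--Taylor witness for $\chi_\ell(K_{3,3}) > 2$, or a slightly larger bipartite block of the same flavour. I would amalgamate $H$ with a $(k-\chi(H))$-clique by a \emph{partial join}, attaching each clique vertex only to a carefully chosen subset of $H$, so as to elevate the chromatic number of the combined graph to $k$ without exceeding the degree bound $2(k-1)$.

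The verification proceeds in four steps. First, exhibit an explicit proper $k$-colouring of $G_k$, using fresh colours on the clique side and a proper $\chi(H)$-colouring on the gadget. Second, show $\chi(G_k) \ge k$ by locating a $K_k$-subgraph created by the partial join. Third, a vertex-by-vertex degree count, once the partial join is tuned, yields $\Delta(G_k) = 2(k-1)$. Fourth, design a size-$k$ list assignment that restricts to the adversarial list assignment on $H$, and analyse case-by-case how the clique vertices can be coloured from their lists, showing that any valid list colouring would force a valid colouring of $H$ from the remaining colours and thereby contradict the non-choosability of $H$.

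The main obstacle is the tightness of the degree bound. A plain full join such as $K_{k-2} \vee K_{3,3}$ has maximum degree $k+3$, which equals $2(k-1)$ only at $k=5$; for other values of $k$, the partial join must be tuned carefully so that no clique vertex picks up more than the allowed number of neighbours, while the non-choosability obstruction survives the partial attachment. A secondary difficulty is the base case $k=3$: Brooks-type bounds for list chromatic number leave only a narrow window ($\chi_\ell \le \Delta = 4$) for a $3$-chromatic graph with $\chi_\ell \ge 4$. A plausible candidate there is a $4$-regular subgraph of $K_{3,3,3}$ obtained by removing a perfect matching between each pair of parts, which one would verify is not $3$-choosable by a direct adversarial list assignment adapted from the one showing $\chi_\ell(K_{3,3,3}) > 3$.
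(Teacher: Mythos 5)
There is a genuine gap: what you have written is a strategy outline, and the two places where it becomes concrete are exactly the places where it fails. First, the ``partial join'' is never specified, and the tension you yourself flag is not resolvable in the form you propose. Your non-choosability argument needs the colours of the clique vertices to delete $k-2$ colours from the list of \emph{every} vertex of $H$, leaving the adversarial $2$-lists on $H$; a vertex of $H$ that misses even one clique vertex retains at least $3$ usable colours, and small bipartite gadgets such as $K_{3,3}$ are $3$-choosable, so the obstruction evaporates. Forcing every $H$-vertex to see all $k-2$ clique vertices is a full join, whose maximum degree is $k+3$ for $K_{k-2}\vee K_{3,3}$ and hence equals the required $2(k-1)$ only at $k=5$; for other $k$ you must either break the full join (killing the forcing) or pad degrees (which you do not do). Moreover the clique vertices are not forced to occupy $k-2$ \emph{prescribed} colours --- they merely avoid their own lists --- so even under a full join the reduction to the bad list assignment on $H$ requires a case analysis over all admissible clique colourings that you have not supplied. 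Second, your fallback for the base case $k=3$ is provably wrong: $K_{3,3,3}$ minus a perfect matching between each pair of parts is the complement of the $3\times 3$ rook's graph, which is the unique strongly regular graph with parameters $(9,4,1,2)$ and is self-complementary; hence your graph is isomorphic to $L(K_{3,3})$, and Galvin's theorem gives $\chi_\ell(L(K_{3,3}))=\chi'_\ell(K_{3,3})=3$. So that candidate is $3$-choosable.

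For comparison, the paper's construction does not transplant non-choosability from a non-choosable gadget at all. It takes two disjoint cliques $K_{k-1}$ on $X$ and $Y$, joins each completely to an independent set ($X'$, resp.\ $Y'$) of size $k$, and joins $X'$ to $Y'$ by $K_{k,k}$ minus a perfect matching; this is $(2k-2)$-regular by construction, contains $K_k$, and is $k$-colourable. The middle gadget $K_{k,k}$ minus a perfect matching is itself chromatic-choosable --- the obstruction is created globally: with lists drawn from a palette of $k+1$ colours, the clique $X$ forces exactly one colour $a$ of $Z_k$ to be absent from $X$, the list $\overline{k+1}$ on $X\cup Y$ and the lists $\bar{i}$ on $X'\cup Y'$ force $a=b$ and then force almost all of $X'\cup Y'$ onto the single colour $a$, contradicting $|X'|\ge 3$. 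If you want to repair your approach you would need a mechanism of this kind, in which the two clique sides interact through the bipartite middle, rather than a single non-choosable block attached to one clique.
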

\begin{proof}
{Let $G$ be a graph with the vertex set $X\cup X' \cup Y'\cup Y$, 
where $X=\{x_1,\ldots, x_{k-1}\}$, $X'=\{x_1,\ldots, x_{k}\}$, $Y'=\{y'_1,\ldots, y'_{k}\}$, $Y=\{y_1,\ldots, y_{k-1}\}$.
We add edges to $G$ such that 
both of $G[X]$ and $G[Y]$ would complete graph, 
both of $G[X,X']$ and $G[Y',Y]$ would be a complete bipartite graph, 
and $G[X',Y']$ would be complete bipartite graph minus edges $x'_iy'_i$, where $1\le i\le k$.
According to the construction, the graph $G$ is a $(2k-2)$-regular graph having some cliques with size $k$.
We claim that $\chi(G)=k$.
To see this, one can color every $x_i$ (resp. $y_i$) by the color $i$ (resp. $i+1$) 
when $1\le i< k$ and color every vertex in $X'$ by the color $k$ and every vertex in $Y'$ by the color $1$. 
Now, we are going to show that $\chi^1_\ell(G)> \chi(G)$. To prove this,
we assign the list $\bar{i}$ to two vertices $x'_{i+k-1}$ and $y'_{i+k-1}$ provided that $1\le i\le k$
and assign the list $\overline{k+1}$ to all vertices of $X\cup Y$, where $\bar{j}=\mathbb{Z}_{k+1}\setminus \{j\}$.
 Suppose, to the contrary, that $G$ admits such a list coloring $c:V(G)\rightarrow \mathbb{Z}_{k+1}$. Therefore,
there exists exactly one color $a\in \mathbb{Z}_{k}$ which is not appeared on all vertices of $X$ and there exists exactly one color $b\in \mathbb{Z}_{k}$ which is not appeared on all vertices of $Y$.
According to the list property, we must have $c(x'_{a+k-1})=k+1$ and $c(y'_{b+k-1})=k+1$ which imply that $a=b$.
Therefore, all other vertices of $X'\cup Y'$ are colored by the color $a$ which is a contradiction, because of $|X'|\ge 3$.
Hence the proof is completed.
 }\end{proof}
\begin{figure}[h]
 \centering
 \includegraphics[scale = 1.3]{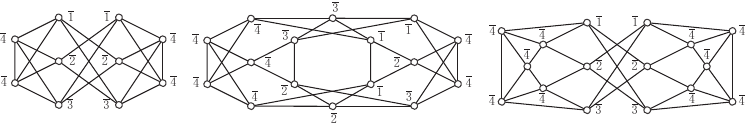}
 \caption{Three $3$-chromatic $4$-regular (non-planar, having only $4$-claws, and planar) graphs $G$ which are not $1$-strongly chromatic-choosable.} 
\label{4-regular}
\end{figure}
However, Conjecture~\ref{conj:revised} is not true for arbitrary graphs, we believe that it can be revised to the following similar version by slightly modifying the lower bound.
\begin{conj}\label{conj:simpler}
{If $G$ is a graph satisfying $\chi(G) \ge \frac{1}{2} (\Delta(G)+3)$, then $G$ is $1$-strongly chromatic-choosable.
}\end{conj}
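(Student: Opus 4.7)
The plan is to attack Conjecture~\ref{conj:simpler} via a minimum counterexample reduction combined with the kernel method of Alon--Tarsi and Bondy--Boppana--Siegel. Rewriting the hypothesis as $\Delta(G)\le 2\chi(G)-3$, I would let $G$ be a vertex-minimal counterexample equipped with a bad list assignment $L$ satisfying $|L(v)|=\chi(G)$ for every $v$. Standard criticality arguments then force every vertex to have degree at least $\chi(G)$, so $G$ is almost regular with degree sequence confined to the narrow window $[\chi(G),\,2\chi(G)-3]$, and in particular every $\chi(G)$-clique of $G$ has very few ``external'' neighbours.

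The main step is to orient $G$ so that every out-degree is at most $\chi(G)-1$, which is possible by Hakimi's theorem since $\Delta(G)\le 2\chi(G)-3<2(\chi(G)-1)$, and then apply the Alon--Tarsi kernel method: it suffices to show that every induced subdigraph admits a kernel. The slack of three in $\Delta\le 2\chi-3$ should provide exactly the flexibility needed to peel off independent absorbing sets, in the spirit of Galvin's stable-matching proof for bipartite line graphs. As a natural stratification I would first verify the conjecture on the diagonal $\chi(G)=\Delta(G)$, where Brooks-type degree-choosability (Erd\H{o}s--Rubin--Taylor) already suffices, and then on the claw-free regime, where neighbourhoods decompose into at most two cliques so that a Galvin-style kernel is directly available. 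Extending from claw-free to general graphs under the hypothesis is where the clique density forced by $\Delta\le 2\chi-3$ would be exploited, via a Ramsey-type decomposition of the vertex set into a few cliques plus a small residue on which greedy list extensions can be performed.

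The principal obstacle, and the reason I would not expect a short proof, is that Conjecture~\ref{conj:simpler} sits essentially on the boundary of the List Coloring Conjecture for line graphs: any multigraph $H$ with $\chi'(H)=\Delta(H)$ yields $G=L(H)$ satisfying $\chi(G)\ge \tfrac{1}{2}\Delta(G)+1$, only a half-step short of the $+3/2$ hypothesis, so any successful argument at this threshold must come very close to resolving Galvin's theorem in the non-bipartite regime. I therefore expect the correct attack to be a genuinely new kernel construction exploiting the slack of three in the hypothesis, rather than any soft reduction to existing list-coloring results; and the cubic examples of Theorems~\ref{thm:bipartite:planar} and~\ref{thm:claw-free:cubic} show that this slack cannot be reduced without introducing new counterexamples.
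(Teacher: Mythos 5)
The statement you are trying to prove is Conjecture~\ref{conj:simpler}, which the paper states as an open conjecture and does not prove; it is offered precisely because the author's counterexamples (Theorems~\ref{thm:bipartite:planar}, \ref{thm:claw-free:cubic}, and~\ref{thm:sharpness:conjecture}) kill the weaker threshold $\chi(G)\ge\frac{1}{2}\Delta(G)+1$. Your text is a research programme rather than a proof, and it contains at least two concrete gaps. First, the minimal-counterexample step is broken: the hypothesis $\chi(G)\ge\frac{1}{2}(\Delta(G)+3)$ is not hereditary, since deleting a vertex $v$ can lower $\chi$ while leaving $\Delta$ unchanged, so $G-v$ need not satisfy the hypothesis and minimality gives you no $L$-coloring of $G-v$ to extend. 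Hence the conclusion that every vertex has degree at least $\chi(G)$, and the ensuing ``almost regular'' structure, does not follow. Second, and more fundamentally, the kernel step is asserted but not carried out: Hakimi's theorem does give an orientation with all out-degrees at most $\chi(G)-1$, but the Bondy--Boppana--Siegel lemma requires that orientation to be kernel-perfect, and generic orientations of non-bipartite graphs contain odd directed cycles without kernels. Producing a kernel-perfect orientation with out-degrees below the list sizes is exactly the unsolved core of the List Coloring Conjecture outside the bipartite case settled by Galvin, and you supply no construction; the phrases ``peel off independent absorbing sets'' and ``Ramsey-type decomposition of the vertex set into a few cliques plus a small residue'' are placeholders for arguments that do not yet exist. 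You correctly identify that the conjecture sits just above the line-graph threshold $\chi(G)\ge\frac{1}{2}\Delta(G)+1$, but that observation explains why the problem is hard; it does not close either gap.
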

However, the condition in Conjecture~\ref{conj:revised} is not sufficient for a graph to be $m$-strongly chromatic-choosable (according to Theorem~\ref{thm:gap:largemaximumdegree}), we believe that it can be revised to the following version by increasing the lower bound (depending on $m$). 
\begin{conj}\label{conj:graph:epsilon:c:strongly-choosable}
{Let $m$ be a positive integer. There are two real numbers $\varepsilon_m$ and $c_m$ with $1/2\le \varepsilon_m <1$ such that every graph $G$ satisfying $\chi(G) \ge \varepsilon_m\Delta(G)+c_m$ is $m$-strongly chromatic-choosable.
}\end{conj}
Finally, we propose the following stronger conjecture which says that all of numbers $\varepsilon_m$ and $c_m$ can be bounded with the same number. 
\begin{conj}\label{conj:graph:epsilon:c}
{Let $c_0$ be a nonnegative integer. There are two real numbers $\varepsilon$ and $c$ with $1/2\le \varepsilon <1$ such that if a graph $G$ satisfies $\chi(G) \ge \varepsilon \Delta(G)+c$, then $\chi_{\ell}(G) \le \chi(G)+c_0$.
}\end{conj}
\subsection{Comparing list-chromatic measures}
As we already mentioned that the inequalities $\chi(G) =\chi^0_\ell(G) \le \chi^1_\ell(G) \le \cdots \le \chi^\infty_\ell(G) =\chi_\ell(G)$ hold for these list chromatic measures. We shall below introduce a stronger relation between these parameters.
\begin{thm}\label{thm:i-strongly}
{Let $m$ be a positive integer. If $G$ is a graph, then
$$ \chi^{m-1}_\ell(G)\le \chi^{m}_\ell(G)\le \chi^{m-1}_\ell(G)+\chi(G)-1\le (m+1)\chi(G)-m.$$
In addition, $\min \{\chi(G)+(1-\frac{1}{m+1})p,(m+1)\chi(G)\}\le \chi^{m}_\ell(G)+m$ for complete multipartite graphs with part size at least $\binom{p}{m}$.
}\end{thm}
\begin{proof}
{Let $k=\chi^{m-1}_\ell(G)$, and let $B$ and $C$ be two disjoint sets of colors with size $k-1+m$ and $\chi(G)$.
Let $L:V(G)\rightarrow 2^{B\cup C}$ be a mapping such that for each vertex $v$, $|L(v)|= k-1+\chi(G)$ and $\cup_{v\in V(G)}L(v) \subseteq B\cup C$.
By the definition of $\chi(G)$, there is a coloring $c_0:V(G)\rightarrow C$ of $G$. 
For each vertex $v$, define $L_0(v)=B$, if $c_0(v) \in L(v)$, and define $L_0(v)=L(v)\setminus C$ if $c_0(v) \not \in L(v)$. 
Note that $|L_0(v)| \ge k$ and $|\cup_{v\in V(G)}L_0(v)|\le k+m-1$.
Therefore, by the definition of $\chi^{m-1}_\ell(G)$, there is a coloring $c'_0:V(G)\rightarrow B$ 
of $G$ such that for each vertex $v$, $c'_0(v) \in L_0(v)$ and so $c'_0(v)\neq c_0(v)$.
Now, for each vertex $v$, 
we define $c(v)=c_0(v)$, if $c_0(v) \in L(v)$, and define $c(v)=c'_0(v)$ if $c_0(v) \not \in L(v)$.
Thus $c(v) \in L(v)$ regardless of $c(v)=c_0(v)$ or $c(v)=c'_0(v)$. Note that since $B$ and $C$ are disjoint, adjacent vertices receive different colors from $c$.
These imply that $ \chi^{m}_\ell(G)\le k-1+\chi(G)$, which completes the first part of the proof.

We shall below write $k$ for $\chi^{m}_\ell(G)$. Let $G$ be the complete multipartite graph with part size $\binom{p}{m}$.
Let $S$ be the set of all lists with size $m$ of elements of $\mathbb{Z}_{p}$ and 
let $L:V(G)\rightarrow S$ be a mapping such that every part of $G$ receives all lists $\{1,\ldots, k+m\}\setminus L$, where $L\in S$. 
Assume that $G$ has such a list coloring $c:V(G)\rightarrow \{1,\ldots, k+m\}$ so that 
 $t$ parts receive at least one color from $\{p+1,\ldots, k+m\}$ and $t_p$ parts receive colors only from $\{1,\ldots, p\}$.
According the list property, every part of the second type must receive at least $m+1$ colors from $\{1,\ldots, p\}$.
This implies that $t_p \le p/(m+1)$.
Thus the number of used colors must be at least $t+(m+1)t_p$.
If $t_p=\chi(G)$, then this lower bound is exactly $(m+1)\chi(G)$.
If $t_p<\chi(G)$, then at least $ \chi(G)-t_p$ colors used from $\{p+1,\ldots, k+m\}$ and so $k+m-p\ge \chi(G)-t_p$.
These can imply that $ \chi^{m}_\ell(G)+m\ge \chi(G)+(1-\frac{1}{m+1})p$.
Hence the proof is completed.
}\end{proof}
For square of graphs $G$, the gap between chromatic number $\chi(G^2)$ and the list chromatic number $\chi^1_\ell(G^2)$ can be arbitrary large by the following corollary.
\begin{cor}{\rm (\cite{Kim-Park-2015})}
{ There exists a graph $G$ satisfying $\chi^1_\ell(G^2)-\chi(G^2)\ge n$, where $n$ is an arbitrary given positive integer.
}\end{cor}
\begin{proof}
{Let $p$ be an arbitrary prime number. It is known that there exists a graph whose square is the complete multipartite graph $G$ with part size $p$ and with chromatic number $2p-1$~\cite{Kim-Park-2015}.
By Theorem~\ref{thm:i-strongly}, we must have $\chi(G^2)+p/2 \le \chi^1_\ell (G^2)+1$.
This can complete the proof.
}\end{proof}
%
%
%
%
%
%
%
%
%
\section{Partial solutions to Conjectures~\ref{intro:conj:small-graphs} and~\ref{conj:simpler}: graphs of small order}

\subsection{Non-chromatic-choosable graphs of small order: computer reports}
Erd\H os, Rubin, and Taylor (1980)~\cite{Erdos-Rubin-Taylor-1980} showed that there are only two edge-minimal non-$2$-choosable graphs of order $6$. More generally, they characterized all non-$2$-choosable graphs (Figure~\ref{smallest-non-chromatic-choosable-graph} shows edge-minimal non-$2$-choosable graphs having order at most $8$). In 2019 Nelsen \cite[Page 40]{Nelsen-2019} showed that all graphs of order at most $8$ are $3$-choosable using a computer search. 
He also posed a question about the smallest graph with chromatic number $3$ which is not $3$-choosable.
Recently, Zhu and Zhu (2022) discovered all edge-minimal $k$-chromatic non-$k$-choosable graphs of order at most $2k+2$.

\begin{figure}[h]
 \centering
 \includegraphics[scale = 1]{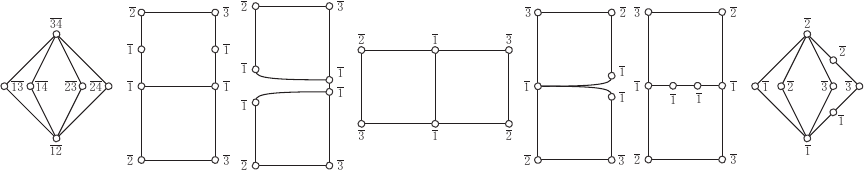}
 \caption{All edge-minimal non-chromatic-choosable graphs $G$ of order at most $8$: 
$\Delta(G)\in \{3,4\}$ and $|E(G)|\in \{7,8,9,10\}$.} 
\label{smallest-non-chromatic-choosable-graph}
\end{figure}
\begin{thm}{\rm (\cite{Zhu-Zhu-2022})}
{For all even integers $k$ with $k\ge 4$, the graphs $\overline{K_{4}} \vee (2K_{k-1})$ or $K_{1*\frac{k-2}{2},3*\frac{k+2}{2}}$ 
are all edge-minimal $k$-chromatic non-$k$-choosable graphs of order at most $2k+2$.
}\end{thm}
For odd positive integers $k$, they had already proved in \cite{Zhu-Zhu-arXiv} that there is no $k$-chromatic non-$k$-choosable graph of order $2k+2$ which was originally conjectured by Noel (2013) \cite{Noel-2013}. On the other hand, the complete multipartite graph $K_{5,2*(k-1)}$ is a $k$-chromatic non-$k$-choosable graph of order $2k+3$~\cite[Proposition 5]{Enomoto-Ohba-Ota-Sakamoto-2002}.
 By an innovative computer search, we succeeded to completely characterize edge-minimal $3$-colorable non-$3$-choosable graphs of order at most $9$ and observed that the smallest graph contains $19$ edges (the top-right graph in Figure~\ref{9all}). It was already proved that $K_{3,3,3}$ is a $3$-chromatic non-$3$-choosable graph of order $9$ \cite[Theorem 3]{Kierstead-2000}. 
\begin{thm}\label{thm:9}
{The complete multipartite graphs $K_{5,2,2}$, $K_{3,3,3}$, and $K_{4,4,1}$ are all edge-maximal $3$-colorable non-$3$-choosable graphs of order $9$ and the graphs in Figure~\ref{9all} are all edge-minimal $3$-colorable non-$3$-choosable graphs of order at most $9$.
}\end{thm}
\begin{proof}
{We used a computer search to find bad list assignments $L$ of size $3$ on a given complete $3$-partite graph $G$ having no $L$-coloring 
(based on Corollary 5 in \cite{Kierstead-2000} to reduce the number of required colors). Next, we started to delete edges one by one to discover edge-minimal $3$-chromatic non-$3$-choosable graphs. 
}\end{proof}
\begin{figure}[h]
 \centering
 \includegraphics[scale =1.35]{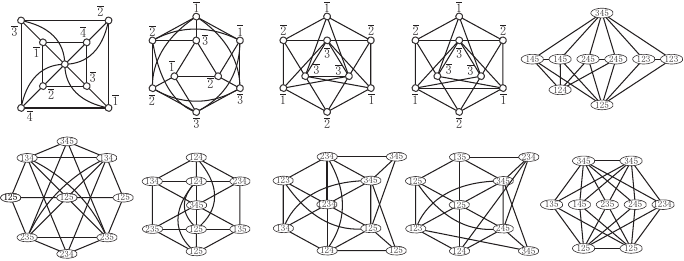}
 \caption{All edge-minimal $3$-chromatic non-$3$-choosable graphs $G$ of order $9$: 
$\Delta(G)\in \{5,6,8\}$ and $|E(G)| \in \{19,20,21, 22\}$.} 
\label{9all}
\end{figure}
 For odd integers $k$ with $k\ge 5$, we conjecture that the graph $\overline{K_{5}} \vee (2K_{k-1})$ is the smallest $k$-chromatic non-$k$-choosable graph (this graph is not $k$-choosable according to Theorem~\ref{thm:sharpness:conj:t-strongly}). 
Moreover, we put forward the following stronger assertion.
\begin{conj}\label{conj:smallest}
{Let $k$ be an odd integers $k$ with $k\ge 5$. 
The two graphs $\overline{K_{5}} \vee (2K_{k-1})$ or $K_{1*\frac{k-3}{2},3*\frac{k+3}{2}}$ are all 
edge-minimal $k$-chromatic non-$k$-choosable graphs of order at most $2k+3$.
}\end{conj}

For even $k$, it would be an interesting but difficult problem to characterize all edge-minimal $k$-chromatic non-$k$-choosable graphs of order $2k+3$ to settle both Conjecture~\ref{intro:conj:small-graphs} and~\ref{conj:smallest}. So, we write the following question for this purpose.
\begin{prob}
{Characterize all edge-minimal (or edge-maximal) $k$-chromatic non-$k$-choosable graphs of order at most $2k+3$ (and also $2k+4)$.
}\end{prob}

It seems that the problem of characterizing all edge-minimal $k$-chromatic non-$k$-choosable graphs of order $2k+4$ is hard. 
In fact, for the small case $k=3$, there are (precisely) $827$ edge-minimal $3$-chromatic non-$3$-choosable graphs of order at most $10$. But for order $2k+3$, the situation seems easier whether $k$ is odd or even. By a computer search, we observed that there are only $11$ edge-minimal $4$-chromatic non-$4$-choosable graphs of order at most $11$, see Figures~\ref{fig:at-most-11-all}. For $k=5$, we also observed that there are only two edge-minimal $5$-chromatic non-$5$-choosable graphs $\overline{K_{5}} \vee (2K_{4})$ and $K_{1,3*4}$ of order at most $13$ using at most $9$ colors in lists, see Figure~\ref{fig:all:X=5-n=13}.
\begin{figure}[h]
 \centering
 \includegraphics[scale =.92]{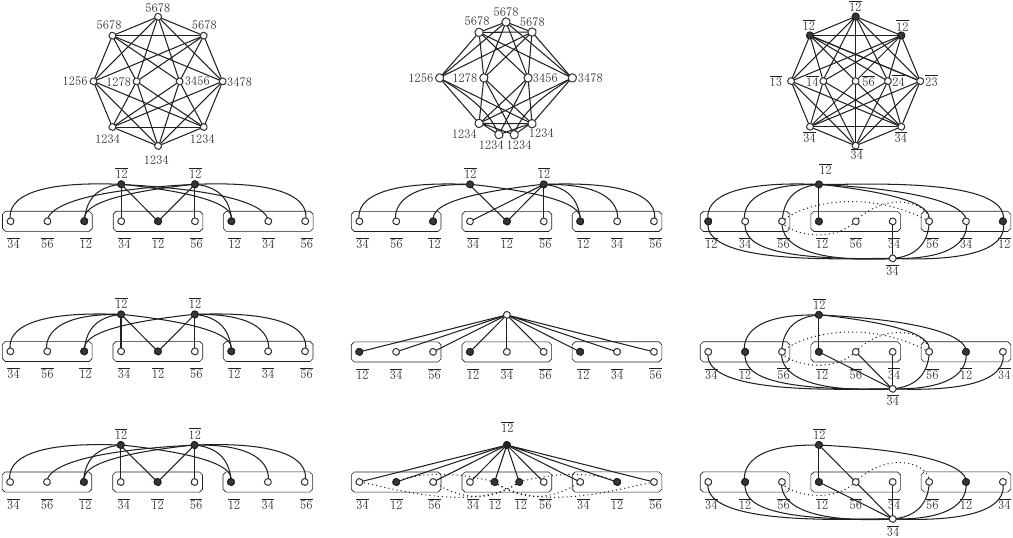}%
 \caption{These graphs are all edge-minimal $4$-chromatic graphs of order at most $11$ which are not (reps. $2$-strongly) $4$-choosable
except from the top-right graph (resp. the top-left and top-middle graphs). 
Three rectangles are corresponded to partite sets of an induced complete multipartite subgraph $K_{3,3,3}$ or $K_{3,3,4}$, and dotted lines mean deleting those edges from it. 
} 
\label{fig:at-most-11-all}
\end{figure}

\begin{figure}[h]
 \centering
 \includegraphics[scale =1.3]{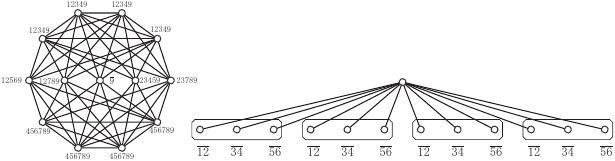}%
 \caption{Two edge-minimal $5$-chromatic non-$5$-choosable graphs of order at most $13$; four rectangles are corresponded to partite sets of the induced complete multipartite subgraph $K_{3,3,3,3}$.} 
\label{fig:all:X=5-n=13}
\end{figure}
%
%
%
%
%
%
%
%
\subsection{$k$-chromatic graphs of order less than $3k$ are $1$-strongly $k$-choosable}
In this section, we are going to confirm Conjecture~\ref{conj:simpler} for $k$-chromatic graphs of small order by proving that all $k$-chromatic graphs of order less than $3k$ are $1$-strongly $k$-choosable (the square graph in Theorem~\ref{thm:claw-free:cubic} shows that the condition on the order is sharp even for square graphs). For this purpose, we need to recall the following well-known lemma.
\begin{lem}{\rm (Hall \cite{Hall-1935})}\label{thm:Hall}
{Let $G$ be a bipartite multigraph with bipartition $(A,B)$. Then $G$ admits a matching $M$ covering all vertices of $A$ if and only if for every 
$S\subseteq A$, $|S| \le |N_G(S)| $. In addition, $M$ covers at least $|A|-t$ vertices of $A$ if and only if for every 
$S\subseteq A$, $|S|\le |N_G(S)|+t$, where $t$ is a nonnegative integer.
}\end{lem}
\begin{proof}
{By applying the first assertion to the graph obtained from $G$ by joining $t$ new vertices to all vertices of $A$, we can derive the second assertion immediately.
}\end{proof}
By a computer search, we observed that for 
for $k=3$ (resp. $k=2$), there are only four (resp. one) edge-minimal graphs of order $3k$ that are not $1$-strongly $3$-choosable. These graphs are depicted in Figure~\ref{9all} by the four top-left graphs (resp. the middle graph in Figure~\ref{smallest-non-chromatic-choosable-graph}). In addition, for $k=4$ (resp. $k=5$), there are only $14$ (resp. $86$) edge-minimal graphs of order $3k$ that are not $1$-strongly $k$-choosable. 

\begin{thm}\label{thm:1-strongly-choosable}
{Every $k$-chromatic graph $G$ of order at most $3k-1$ is $1$-strongly $k$-choosable.
In addition, if $|V(G)|=3k$ and $G$ is not $1$-strongly $k$-choosable, then one of the following conditions holds:
\begin{enumerate}{

\item
The graph $G$ is obtained from the complete $k$-partite graph $K_{(k+1)*2, 1*(k-2)}$ by removing some of the edges $v_{1,j}v_{2,j}$ between any two parts $V_i=\{v_{i, 1}, \ldots, v_{i, k+1}\}$ of size $k+1$ for which $i=1,2$ provided that $k\ge 4$. 

\item 
The graph $G$ is obtained from the complete $k$-partite graph $K_{3*k}$ by removing at most two edges $v_{i, t}v_{j, t}$ between any two parts $V_i$ and $V_j$. More precisely, for every nonempty set of indices $I\subseteq \{1,\ldots, k\}$, $\sum_{1\le j \le 3}\chi(G[U_j])\ge |I|+2$, 
where 
$U_j=\{v_{i, j}: v_i \in I\}$ and
 $V_i=\{v_{i, 1}, v_{i, 2}, v_{i, 3}\}$ is a part of $K_{3*k}$.
}\end{enumerate}
As a consequence, we must have $\Delta(G)\ge \frac{7}{2}(k-1)$.
}\end{thm}
\begin{proof}
{Let $G$ be a complete $k$-partite graph 
with $k$-partition $V_1,\ldots, V_k$ and for each vertex $v$, let $L(v)$ be a subset of $\{c_1,\ldots, c_{k+1}\}$ of size $k$. Let $\mathcal{B}$ be a bipartite graph with bipartition $(P,C)$, where $P=\{p_1,\ldots, p_{k}\}$ and $C=\{c_1,\ldots, c_{k+1}\}$. We join $p_i$ to $c_j$ in $\mathcal{B}$ if $L(v)\neq \{c_1,\ldots, c_{k+1}\} \setminus \{c_j\}$ for all $v\in V_i$.
First, we claim that $G$ admits an $L$-coloring if and only if 
the graph $\mathcal{B}$ contains a matching $M$ covering at least $k-1$ vertices of $P$.
Assume $\mathcal{B}$ contains a matching $M$ covering at least $k-1$ vertices of $P$. 
We may assume that $|E(M)|=k-1$. 
Thus $d_M(p_{t})=d_M(c_{j_1})= d_M(c_{j_2})=0$ for an index $t$ and two indices $j_1$ and $j_2$.
Let $v\in V(G)$. 
If $v\in V_{t}$, then we color this vertex by $c_{j_1}$ or $c_{j_2}$ which is in $L(v)$.
If $v\in V_{i}$ and $i\neq t$, 
then we color this vertex by the color $c_j$ where $p_ic_j\in E(M)$.
This yields that $G$ admits an $L$-coloring.
Conversely, assume that $G$ contains an $L$-coloring. 
Since $G$ is a complete $k$-partite graph and the number colors is $k+1$, there must be at least $k-1$ parts having exactly one color. This means that $\mathcal{B}$ contains a matching $M$ covering at least $k-1$ vertices of $P$. Hence the claim is proved.

Now, assume that $H$ does not contain a matching $M$ covering at least $k-1$ vertices of $P$.
Thus by Lemma~\ref{thm:Hall}, there exists a subset $S\subseteq P$ satisfying $|N_\mathcal{B}(S)| \le |S|-2$.
Let $p$ be a vertex in $S$ with maximum degree.
It is not hard to check that 
$d_\mathcal{B}(p).|S|\ge \sum_{p_i\in S}d_\mathcal{B}(p_i)\ge
 \sum_{p_i\in S}(k+1-|V_i|) = (k+1)|S|-\sum_{p_i\in S}|V_i|\ge
k(|S|+1)-|V(G)|$. 
Therefore, $|S|\ge |N_\mathcal{B}(S)|+2\ge d_\mathcal{B}(p)+2\ge k+2+\frac{1}{|S|}(k-|V(G)|)$ 
and so $|V(G)| \ge k+(k+2-|S|)|S|$. Assume that $V(G)\le 3k$.
Since $2\le |S|\le k$, these inequalities imply that $|S|\in \{2,k\}$, $|V(G)|= 3k$, $|V_i|=1$ for all $p_i\in P\setminus S$, 
and $N_\mathcal{B}(S) = N_\mathcal{B}(p_i)$ and $|N_\mathcal{B}(p_i)|=k+1-|V_i|$ for all $p_i\in S$.
Hence we have the following two cases:

{\bf Case 1}: $|S|=k$.

In this case, according to the above-mentioned arguments, we must have $|V_i|=3$ for all $i\in \{1, \ldots, k\}$. 
In addition, there must be three fixed types of lists distributed on the triple vertices of any $V_i=\{v_{i, 1}, v_{i, 2}, v_{i, 3}\}$.
We may assume that the given list of $v_{i, j}$ is $ \bar{j}=\mathbb{Z}_{k+1 }\setminus \{j\}$, where $j\in \{1,2,3\}$.
Suppose, to the contrary, $G$ does not contain an edge in which 
$e=v_{a,a'}v_{b,b'}$ for which $a\neq a'$ and $b'\neq b'$.
Without loss of generality, we may assume that $a=k-1, a'=2$ and $b=k, b'=3$.
In this case, we color every vertex $V_i$ by the color $i$ provided that $1\le i\le k-2$ .
Next, we color both vertices $v_{k-1, 2}$ and $v_{k, 3}$ by $1$, 
both vertices $v_{k-1, 1}$ and $v_{k-1, 3}$ by $2$, 
and both vertices $v_{k, 1}$ and $v_{k, 2}$ by $3$.
Thus $G$ admits an $L$-coloring, which is a contradiction.
To complete the proof of Case 1, it is enough to prove the following claim.
Note that since $G$ does not admit an $L$-coloring, by setting $I=\{i, j\}$ and $G'=G$, one can conclude that there must be at leas one edge $v_{i, t}v_{j, t}$ between $V_i$ and $V_j$ (because the chromatic number of at least one set $W_j$ is at least two).

{\bf Claim}: 
Every spanning subgraph $G'$ of $G$ admits an $L$-coloring if and only if $\sum_{1\le j\le 3}\chi(G'[U_j])\le |I|+1$ for every index-set $I\subseteq \{1,\ldots, k\}$, where $U_j=\{v_{i,j}: i\in I\}$.

{\bf Proof of claim}: 
Let $I$ be an arbitrary subset of $\{1,\ldots, k+1\}$.
Let $A_j$ be a subset of $\mathbb{Z}_{k+1 }$ of size $\chi(G[U_j])$ excluding the color $j$, 
where $j\in \{1,2, 3\}$. 
We may assume that $A_1$, $A_2$, and $A_3$ are disjoint.
If $|A_1\cup A_2\cup A_3|$ is at most $k+1-|I|$, then we color any triple of vertices of every part $V_i$ with $i \in \{1,\ldots, k+1\}\setminus I$ by a unique color in $\mathbb{Z}_{k+1 } \setminus (A_1\cup A_2\cup A_3)$.
Next, we provide a proper coloring for $G'[U_i]$ using all colors of $A_i$.
Thus $G'$ admits an $L$-coloring, as desired.

Conversely, assume that $G'$ admits an $L$-coloring. 
Let $I$ be the set of all indices $i$ such that all vertices of $V_i$ have the same color. Obviously, $|I|\ge 2$.
 We consider such a coloring with minimum number of colors appearing on exactly two vertices of a part $V_i$.
To complete the proof, we are going to show that this minimum number is zero.
Otherwise, without loss of generality, we may assume that a color 
$c'$ appeared on only two vertices $v_{1, 2}$ and $v_{1, 3}$ of a part $V_1$.
If $c'\not \in \{1,2, 3\}$, then we replace the color of $v_{1,1}$ by $c'$ to obtain a new $L$-coloring with smaller desired number which is a contradiction.
Assume that $c' \in \{1,2, 3\}$ which implies that $c'=1$.
 Let $i\in I\setminus \{1\}$. 
If the color of $v_{i, 2}$ is not in $\{1,2,3\}$, then we can exchange it with the color $1$ on all vertices and replace the color of $v_{1,1}$ by this color. This gives a new $L$-coloring with smaller desired number which is a contradiction.
Thus the color of $v_{i, 2}$ must be $3$. Similarly, the color of $v_{i, 3}$ must be $2$. These imply that the color of $v_{1,1}$ is not in $\{1,2,3\}$.
Finally, we exchange the color $3$ by the color of $v_{1,1}$ on all vertices and alternatively, 
we exchange the color $3$ by the color $1$ on all vertices. 
Again, this gives a new $L$-coloring with smaller desired number which is a contradiction.
Hence the proof of the claim and Case 1 are completed.

{\bf Case 2}: $|S|=2$.

In this case, according to the above-mentioned arguments, we must have $|V_1|=|V_2|=k+1$ and $|V_3|=\cdots =|V_k|=1$ (by permuting parts).
In this case, there must be $k+1$ different types of lists distributed on $k+1$ vertices of $V_1$ (resp. $V_2$). 
Let $V_i=\{v_{i, 1}, \ldots, v_{i, k+1}\}$ for which $i=1,2$.
We may assume that the given lists of both vertices $v_{1, j}$ and $v_{2, j}$ are the same list $ \bar{j}=\mathbb{Z}_{k+1 }\setminus \{j\}$, where $j\in \{1,\ldots, k+1\}$.
If $k\le 2$, then the assertion follows from Case 1. So, suppose $k\ge 3$.
Suppose, to the contrary, $G$ does not contain an edge in which 
(a1) $e=v_{1,j}v_{2,j'}$ and $j\neq j'$,
(a2) $e=uv$ for which $V_j=\{v\}$, and $V_j'=\{u\}$, or
(a3) $e=v_{i,j}v$ for which $V_{j'}=\{v\}$.

In first subcase, we first color all $k-2$ vertices in $V_3\cup \cdots\cup V_k$ using $k-2$ colors in $\{1,\ldots, k+1\}\setminus \{j, j'\}$
by avoiding the forbidden color for every vertex. Let $c'$ be the unique remaining color in this set. 
Next, we color both ends of $e$ with the same color $c'$. 
Finally, we color every remaining vertex of $V_1$ by $j$ and 
every remaining vertex of $V_2$ by $j'$. Thus $G$ admits an $L$-coloring, which is a contradiction.

In second subcase, we first color all $k-4$ vertices in $\cup_{i\in \{3,\ldots, k\}\setminus \{j, j'\}} V_i$ using $k-4$ colors in $\{1,\ldots, k+1\}$ by avoiding the forbidden color for every vertex. 
Next, we color both ends of $e$ by the same remaining color different from forbidden colors of them.
 Let $c_1,\ldots, c_4$ the remaining colors.
Finally, we color every vertex of $V_1$ by $c_1$ or $c_2$ and 
every vertex of $V_2$ by $c_3$ or $c_4$ by avoiding the forbidden color for every vertex. Thus $G$ admits an $L$-coloring, which is a contradiction.

In third subcase, we first color all $k-3$ vertices in $\cup_{i\in \{3,\ldots, k\}\setminus \{j'\}} V_i$ using $k-3$ colors in 
$\{1,\ldots, k+1\}\setminus \{j\}$ by avoiding the forbidden color for every vertex. 
Next, we color both ends of $e$ by the same remaining color different from forbidden colors of them.
Next, we color every remaining vertex of $V_i$ by $j$.
 Let $c_1,c_2$ be the remaining colors.
Finally, we color every vertex of $V_{3-i}$ by $c_3$ or $c_4$ by avoiding the forbidden color for every vertex. Thus $G$ admits an $L$-coloring, which is a contradiction.

Now, we claim that if $G_0$ is the spanning subgraph of $G$ excluding all edges $v_{1, j}v_{2, j}$, then 
$G_0$ does not admit an $L$-coloring. 
Otherwise, at most three colors $c_1,\ldots, c_3$ appeared on all vertices of $V_1\cup V_2$.
According to the list assignments, for every $V_i$, there are at least two colors appeared on it.
If these three colors appeared on vertices of a part $V_i$, then each of those colors appears on at most on vertex of $V_{3-i}$ and so
 $k+1=|V_{3-i}|\le 3$, which is impossible.
 Thus for every $V_i$, there are exactly two colors appeared on it.
On the other hand, there is a color $c_t$ appeared on both $V_1$ and $V_2$, and so it must appear on only two vertices $v_{1,j}$ and $v_{2,j}$. 
Without loss of generality, we may assume that $t=3$ and $c_i$ appeared on $V_i$.
If $c_1\neq j$ (resp. $c_2\neq j$), then the vertex $v_{1, c_1}$ (resp. $v_{2, c_2}$) receives a color different from $c_1, c_2, c_3$ which is a contradiction.
}\end{proof}
Motivated by Theorem~\ref{thm:1-strongly-choosable}, we put forward the following conjecture for further investigation. We succeeded to prove this conjecture for $t=2$ with a stronger version by increasing the upper bound by one and charactering all exceptional graphs.
More precisely, there is a unique exceptional graph when $k$ is even and $k\ge 4$, and 
there are $21$ edge-minimal exceptional graphs when $k=5$.
We will present its details in a forthcoming paper.
\begin{conj}\label{conj:t-strongly-V(G):bound}
{Let $k$ and $t$ be two positive integers. If $G$ is a $k$-chromatic graph and $|V(G)|< 2k+\lceil \frac{k}{t}\rceil$, then $G$ is $t$-strongly $k$-choosable. 
}\end{conj}
 It would be an interesting problem to determine the exceptional graphs with higher order. 
In the following theorem, we introduce several families of $k$-colorable graphs which are not $t$-strongly $k$-choosable. 
This result improves and generalizes graph construction of Proposition 5 and Example 1 in~\cite{Enomoto-hba-Ota-Sakamoto-2002}.
\begin{thm}\label{thm:sharpness:conj:t-strongly}
{If $k$ and $t$ are two integers and $2\le t\le k$, then the following graphs are not $t$-strongly $k$-choosable:
\begin{enumerate}{
\item [$\bullet$]
 $K_{ \lceil\frac{k+t}{t} \rceil *n,1*(k-n)}$, where $n=t+1\le k$.

\item [$\bullet$]
$K_{ \lfloor \frac{k+t}{t} \rfloor *n,1*(k-n)}$, where $n =t+1+r\le k$ and $k\stackrel{t}{\equiv}r\in \{0,\ldots, t-1\}$. 

\item [$\bullet$]
$\overline{K_{ \lceil\frac{k+2}{t}\rceil+3}} \vee (2K_{k-1})$ when $t$ is odd, and
$\overline{K_{ \lceil\frac{k}{t}\rceil+3}} \vee (2K_{k-1})$ when $t$ is even. 

}\end{enumerate}
}\end{thm}
\begin{proof}
{Let $V_1, \ldots, V_k$ be a partition of $V(G)$ corresponding to a $k$-coloring of $G$. 
First assume $G=K_{ \lceil\frac{k+t}{t} \rceil *n,1*(k-n)}$ and $V_i=\{v_{i, 1}, \ldots,v_{i, \lceil\frac{k+t}{t} \rceil} \}$ when
 $1\le i \le n=t+1$. 
Let $C_1,\ldots, C_p$ be a partition of $\{1,\ldots, k+t\}$ such that $|C_p|\le t$ and $|C_i|=t$ for all $i$ with $1\le i< p$.
For each $v_{i, j}\in V_i$, we assign the list $L(v_{i, j})=\{1,\ldots, k+t\}\setminus C_j$.
For any $v\in V_i$ with $i> n$, we assign the list $\{1,\ldots, k+t\}$.
If $G$ admits an $L$-coloring, then at least two colors must appear on vertices of any part $V_i$ with $1\le i\le n$,
 because every color of $\{1,\ldots, k+t\}$ is forbidden from the list of a vertex in $V_i$.
 Thus we need at least $2n+(k-n)$ colors which is strictly greater than $k+t$. This is a contradiction.

Next, assume $G=K_{ \lfloor \frac{k+t}{t} \rfloor *n,1*(k-n)}$ and $V_i=\{v_{i, 1}, \ldots,v_{i, \lfloor \frac{k+t}{t} \rfloor} \}$ 
when $1\le i \le n = t+1+r$. 
Let $C_1,\ldots, C_p$ be a partition of $\{1,\ldots, k+t-r\}$ such that $|C_p|\le t$ and $|C_i|=t$ for all $i$ with $1\le i< p$.
For each $v_{i, j}\in V_i$, we assign the list $L(v_{i, j})=\{1,\ldots, k+t\}\setminus C_j$.
For any $v\in V_i$ with $i> n$, we assign the list $\{1,\ldots, k+t\}$. The right graph in Figure~\ref{fig:all:X=5-n=13} shows the special case $k=5$ and $t=2$.
If $G$ admits an $L$-coloring, then at least two colors must appear on vertices of any part $V_i$ with $1\le i\le n$ 
except from at most $r$ parts used only one color from $\{i: k+t-r < i\le k+t\}$, 
because every color of $\{1,\ldots, k+t-r\}$ is forbidden from the list of a vertex in $V_i$.
 Thus we need at least $k+t+1$ colors which is again a contradiction. 

Finally, assume that $G=\overline{K_{n}} \vee (2K_{k-1})$, where 
$n-4 = \lceil\frac{k-t+2}{t}\rceil$ when $t$ is odd and $n-4 = \lceil\frac{k-t}{t}\rceil$ when $t$ is even.
Let $V_1$ and $V_2$ be the vertex sets of the subgraphs isomorphic to $K_{k-1}$, and
let $U$ be the vertex set of the subgraph isomorphic to $\overline{K_n} $.
Assume that $V_i=\{v_{i, 1},\ldots, v_{i, k-1}\}$ and $U=\{u_{i,1},\ldots, u_{i,n}\}$.
Let $A_i=\{a_{i, 1},\ldots, a_{i, t}\}$ and $B=\{b_1, \ldots, b_{k-t}\}$ be the three disjoint sets of colors, where $i=1,2$.
We define $A_{i, 1}=\{a_{i, j}: 1\le j\le \lfloor t/2\rfloor \}$ and $A_{i, 2}=\{a_{i, j}: \lfloor t/2\rfloor < j\le t-t_0 \}$, where $t\stackrel{2}{\equiv}t_0\in \{0,1\}$. Let $B'=B$ when $t$ is even and let $B'=B\cup \{a_{1,t}, a_{2,t}\}$ when $t$ is odd.
Let $C_1,\ldots, C_p$ be a partition of $B'$ such that $|C_p|\le t$ and $|C_i|=t$ for all $i$ with $1\le i< p$. 
In addition, one set $C_i$ includes both of vertices $a_{1,t}, a_{2,t}$ when $t$ is odd; notice that this is possible because $t\neq 1$.
First, we assign the list $A_i\cup B$ to every vertex of $V_i$.
Next, for each $u_{j}\in U$ with $j\le n-4$, we assign the list $L(u_{ j})=(A_1\cup A_2\cup B)\setminus C_j$.
Note that there are four remaining vertices of $U$.
If $t$ is odd, we assign four lists
 $A_{1, i}\cup A_{2, i} \cup \{a_{1, t}\}$ and $A_{1, i}\cup A_{2, j} \cup \{a_{2, t}\}$ to these vertices, where $i,j\in \{1,2\}$ and $j\neq i$.
If $t$ is even, we assign four lists 
$A_{1, i}\cup A_{2, j} $ to these vertices, where $i, j\in \{1,2\}$. The left graph in Figure~\ref{fig:all:X=5-n=13} shows the special case $k=5$ and $t=4$.

Suppose, to the contrary, that $G$ admits an $L$-coloring. 
Then all vertices of $V_i$ receive all colors from $A_i\cup B$ except a color $x_i$.
If $x_i\in B$ for an index $i\in \{1,2\}$, then we must have $x_1=x_2$ (otherwise, all colors are forbidden to use for vertices in $U$, because vertices of $V_1\cup V_2$ are connected to vertices of $U$). This derives contradiction, because there is a vertex in $U$ such that the color $x_1$ is forbidden in the list of $u$ and so this vertex receives a color of $B$ different from $x_1$, which is impossible.
Thus we may assume that $x_1\in A_1$ and $x_2\in A_2$. 
In this case, there is a vertex in $u\in U$ such that both colors $x_1, x_2$ are forbidden in the list of $u$ and so this vertex receives a color of $A_1\cup A_2\cup B$ different from $x_1$ and $x_2$. 
This derives a contradiction because $u$ is adjacent to all vertices of $V_1\cup V_2$ and its color cannot appear on all vertices of them.
Hence the proof is completed. 
}\end{proof}
%
%
%
%
%
%
%
%
\section{The existence of non-chromatic-choosable graphs with bounded maximum degree}
\label{sec:last}
\subsection{Bipartite graphs}
Motivated by Conjecture~\ref{conj:graph:epsilon:c}, we would like to know upper bounds on the maximum degrees to guarantee $k$-choosability property of $b$-coloring graphs. 
According to Theorems~\ref{thm:gap:largemaximumdegree} and~\ref{thm:Kn-n:smaller-graph} (with setting $a=\log(k)$), we have the inequalities $ f(k,k)\le 2k-O(\log(k))$ and $f(k,k)\le f(k,k-1) \le \cdots \le f(k,2)< 2^{k}$.
\begin{prob}
{Let $k$ and $b$ be two positive integers with $k\ge b$. What is the maximum number $f(k,b)$ such that every $b$-colorable graph $G$ with maximum degree at most $ f(k,b)$ is $k$-choosable?
}\end{prob}
In 1992 Alon and Tarsi showed that there is an interesting relation between choosability and orientation of bipartite graphs as the following theorem.
They also remarked that the following result is sharp for complete bipartite graphs $K_{n,n^n}$. 
 This result also implies that every bipartite graph $G$ of maximum degree at most $\Delta $ is $\lceil \Delta/2 \rceil$-choosable~\cite[Theorem 3.2]{Alon-Tarsi-1992}. 
In addition, there are non-$3$-choosable bipartite graphs with maximum degree $5$ \cite[Proposition 6]{Bessy-Havet-Palaysi-2002} while every bipartite graph with maximum degree $4$ must be $3$-choosable (which implies that $f(3, 2)= 5$). Note that Alon and Krivelevich (1998) \cite{Alon-Krivelevich-1998} conjectured that every bipartite graph $G$ of maximum degree at most $\Delta $ is $O(\log \Delta)$-choosable (which means that $f(k, 2)\ge 2^{O(k)}$).
\begin{thm}{\rm (\cite{Alon-Tarsi-1992})}\label{thm:Alon-Tarsi-1992}
{Let $G$ be a bipartite graph and let $D$ be an orientation of $G$, and $L:V(G)\rightarrow 2^\mathbb{Z}$ be a mapping.
If for each vertex $v$, $|L(v)|\ge d^+_D(v)+1$, then $G$ admits an $L$-coloring.
}\end{thm}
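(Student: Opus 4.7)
The plan is to prove the theorem via the \emph{kernel method}. Recall that a \emph{kernel} of a digraph $D$ is an independent set $K\subseteq V(D)$ such that every vertex in $V(D)\setminus K$ has an out-arc into $K$. The driving lemma I would use is: if $|L(v)|\ge d^+_D(v)+1$ for every $v$ and every induced subdigraph of $D$ admits a kernel, then $G$ is $L$-colorable. The statement then follows by combining this lemma with the fact that every orientation of a bipartite graph, and of any of its induced subgraphs, has a kernel.

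I would run the kernel method by strong induction on $|V(G)|$. Pick any color $c\in\bigcup_v L(v)$ and set $A=\{v\in V(G):c\in L(v)\}$. The induced subdigraph $D[A]$ is an orientation of the bipartite graph $G[A]$, so it has a kernel $K\subseteq A$. Color every vertex of $K$ by $c$; this is legal because $K$ is independent in $G$ and $c\in L(v)$ for each $v\in K$. On the remainder $G-K$ with orientation $D-K$ and new lists $L'(v)=L(v)\setminus\{c\}$ for $v\in A\setminus K$ and $L'(v)=L(v)$ for $v\notin A$, the hypothesis is preserved: for $v\in A\setminus K$ the kernel property supplies an out-arc from $v$ into $K$, so $d^+_{D-K}(v)\le d^+_D(v)-1$ while $|L'(v)|=|L(v)|-1$; for $v\notin A$ the out-degree can only decrease and the list size is unchanged. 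Since $G-K$ is again bipartite, the inductive hypothesis applies and finishes the proof.

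The only nontrivial ingredient is the kernel existence for every induced sub-orientation of $D$. I would obtain this from Richardson's classical theorem, which asserts that every finite digraph with no odd directed cycle has a kernel: in our setting every cycle of the underlying bipartite graph is already even, hence no orientation of it can contain an odd directed cycle, and the same property is inherited by every induced subgraph. Thus the \emph{main obstacle} is not any new combinatorial content but the careful bookkeeping in the inductive step — matching the drop in list sizes with the drop in out-degrees precisely on $A\setminus K$, and checking that vertices outside $A$ still satisfy the hypothesis after deletion. Once that accounting is pinned down, the kernel method together with Richardson's theorem deliver the conclusion cleanly.
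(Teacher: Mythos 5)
Your argument is correct. Note, though, that the paper itself gives no proof of this statement --- it is quoted from Alon and Tarsi, whose original argument is algebraic: one considers the graph polynomial $\prod_{uv\in E(G)}(x_u-x_v)$ and shows that for an orientation $D$ of a \emph{bipartite} graph every spanning Eulerian subdigraph decomposes into directed cycles of even length, so the count of odd Eulerian subdigraphs is zero while the even count is at least one (the empty subdigraph); the Alon--Tarsi coefficient criterion then yields an $L$-coloring whenever $|L(v)|\ge d^+_D(v)+1$. Your route is instead the kernel method: Richardson's theorem gives a kernel in every digraph with no odd directed cycle, orientations of bipartite graphs (and all their induced subdigraphs) have no odd directed cycles, and your bookkeeping in the inductive step --- the out-arc from each $v\in A\setminus K$ into $K$ paying for the lost color $c$, vertices outside $A$ unaffected --- is exactly right; the induction terminates since a kernel of a nonempty digraph is nonempty. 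Both proofs are sound; the algebraic proof generalizes to non-bipartite graphs whenever the Eulerian counts differ (which is where the full strength of Alon--Tarsi lies), while your kernel argument is more elementary, purely combinatorial, and is the same mechanism that underlies Galvin's proof of the List Coloring Conjecture for bipartite line graphs cited elsewhere in the paper.
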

By considering balanced orientations of graphs, one can conclude the following corollary.
\begin{cor}\label{cor:bipartite}
{Let $G$ be a bipartite graph and $L:V(G)\rightarrow 2^\mathbb{Z}$ be a mapping.
If for each vertex $v$, 
$$|L(v)|\ge \lceil \frac{1}{2}d_G(v)\rceil+1,$$ then $G$ admits an $L$-coloring.
In addition, for a vertex $z$, we can have $|L(z)|\ge \lfloor \frac{1}{2}d_G(z)\rfloor+1$.
(Furthermore, if $G$ is $(k-1)$-edge-connected and contains $2k$ vertices with odd degrees, then we can replace the condition $|L(z)|\ge \lfloor \frac{1}{2}d_G(z)\rfloor+1$ for at least $k$ vertices $z$ with odd degrees.)
}\end{cor}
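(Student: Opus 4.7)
The plan is to derive the corollary from Theorem~\ref{thm:Alon-Tarsi-1992}: it suffices to exhibit an orientation $D$ of $G$ with $d^+_D(v)\le \lceil \tfrac12 d_G(v)\rceil$ at every vertex $v$, because the hypothesis then gives $|L(v)|\ge d^+_D(v)+1$, so Theorem~\ref{thm:Alon-Tarsi-1992} yields the desired $L$-coloring.

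To build such a $D$, I would first augment $G$ to an Eulerian multigraph $G'$ by adjoining a new vertex $w$ joined by a single edge to every odd-degree vertex of $G$. All degrees of $G'$ are then even, so $G'$ admits an Eulerian orientation $D'$, that is, $d^+_{D'}(u)=d^-_{D'}(u)=\tfrac12 d_{G'}(u)$ for all $u$. Let $D$ be the restriction of $D'$ to $E(G)$; for each $v\in V(G)$ one has $d^+_D(v)\le d^+_{D'}(v)=\lceil\tfrac12 d_G(v)\rceil$, with equality unless $v$ is odd and $vw$ is oriented $v\to w$, in which case $d^+_D(v)=\lfloor\tfrac12 d_G(v)\rfloor$. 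This proves the main inequality.

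For the improvement at a single vertex $z$: if $z$ has even degree the stated bound is automatic; otherwise $z$ is odd, and I choose $D'$ so that $zw$ is oriented $z\to w$, giving $d^+_D(z)=\lfloor\tfrac12 d_G(z)\rfloor$. This is possible because the Eulerian circuit on the component of $G'$ containing $w$ can be started along any prescribed edge, so one edge at $w$ may be oriented at will.

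For the parenthetical strengthening, with $|T|=2k$ odd vertices and $G$ assumed $(k-1)$-edge-connected, I want $D'$ to orient $z_iw$ as $z_i\to w$ for $k$ designated $z_1,\dots,z_k\in T$ and oppositely for the other $k$ odd vertices, forcing $d^+_D(z_i)=\lfloor\tfrac12 d_G(z_i)\rfloor$. Any such choice gives $d^\pm_{D'}(w)=k$, compatible with Eulerianity at $w$, and the task reduces to orienting $G$ itself with prescribed out-degrees $f(v)=\lfloor\tfrac12 d_G(v)\rfloor$ on $\{z_1,\dots,z_k\}$ and on every even-degree vertex, and $f(v)=\lceil\tfrac12 d_G(v)\rceil$ on each non-designated odd vertex. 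A direct count gives $\sum_v f(v)=|E(G)|$, and by Hakimi's theorem on orientations with prescribed out-degrees the desired orientation exists iff $\sum_{v\in S}f(v)\ge e_G(S)$ for every $S\subseteq V(G)$. A short computation expanding $\sum_{v\in S}\lfloor\tfrac12 d_G(v)\rfloor$ in terms of $e_G(S)$ and $e_G(S,\overline{S})$ reduces this to $e_G(S,\overline{S})\ge a(S)-b(S)$, where $a(S),b(S)$ respectively count designated and non-designated odd vertices in $S$. The main obstacle is verifying this last inequality: for nontrivial $S$ with $a(S)-b(S)\le k-1$ the $(k-1)$-edge-connectivity of $G$ already gives $e_G(S,\overline{S})\ge k-1$, while in the tight case $a(S)-b(S)=k$ (which forces $a(S)=k$, $b(S)=0$) the parity identity $e_G(S,\overline{S})\equiv a(S)+b(S)\equiv k\pmod 2$ upgrades $e_G(S,\overline{S})\ge k-1$ to $e_G(S,\overline{S})\ge k$, exactly what is required. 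The boundary cases $S=\emptyset,V$ are immediate, so Hakimi's theorem supplies $D$ and the proof concludes.
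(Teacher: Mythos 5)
Your proof is correct, and for the first two assertions it coincides with the paper's: both reduce everything to Theorem~\ref{thm:Alon-Tarsi-1992} by producing a balanced orientation with $d^+_D(v)\le\lceil\frac12 d_G(v)\rceil$ (the paper simply invokes a balanced orientation and reverses it if needed to handle the distinguished vertex $z$; your explicit Eulerian construction via the auxiliary vertex $w$, steering the circuit so that $zw$ is oriented $z\to w$, is the same idea made concrete). The parenthetical strengthening is where you genuinely diverge. The paper removes $k-1$ edge-disjoint paths from $k-1$ designated odd vertices of $Z$ to $k-1$ odd vertices outside $Z$ (via the edge version of Menger's theorem), takes a balanced orientation of what remains with the last vertex of $Z$ made deficient, and reinserts the paths oriented toward their $Z$-endpoints; this is short but leans on a Menger-type linkage between vertex sets and leaves the bookkeeping of parities implicit. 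You instead prescribe the full out-degree function $f$ and verify Hakimi's feasibility condition $\sum_{v\in S}f(v)\ge e_G(S)$ directly, reducing it to $e_G(S,\overline{S})\ge a(S)-b(S)$ and closing the tight case $a(S)-b(S)=k$ with the parity identity $e_G(S,\overline{S})\equiv a(S)+b(S)\pmod 2$. Your route is somewhat longer but makes transparent exactly where the $(k-1)$-edge-connectivity and the hypothesis of exactly $2k$ odd vertices enter (note that your degree-sum check $\sum_v f(v)=|E(G)|$ really does require the number of odd vertices to be exactly $2k$, which is how the statement should be read); the paper's route is more economical but requires the reader to supply the linkage argument and the parity bookkeeping after path deletion. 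Both are valid proofs of Corollary~\ref{cor:bipartite}.
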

\begin{proof}
{Let $D$ be a balanced orientation of $G$ so that for each vertex $v$, $|d^+_D(v)-d^-_D(v)|\le 1$ which implies that $d^+_D(v)\le \lceil \frac{1}{2}d_G(v)\rceil$.
We may also assume that $d^+_D(z)\le \lfloor \frac{1}{2}d_G(z)\rfloor$.
Otherwise, we can reverse the orientation of $D$. Now, it is enough to apply Theorem~\ref{thm:Alon-Tarsi-1992} to complete
the first part of the proof. We shall below prove the remaining part.
Let $Z$ be a set of $k$ vertices with odd degrees and let $k-1$ edge-disjoint paths $P_1,\ldots, P_{k-1}$ starting from $k-1$ specified vertices in $Z$ and ended with some $k-1$ odd-degree vertices in $V(G)\setminus Z$ (using edge-version of Menger's Theorem). Let $G'$ be the graph obtained from $G$ by removing all edges of these paths. Now, we consider a balanced orientation for this graph such that for an arbitrary vertex $z$ with odd degree, its out-degree is less than its in-degree. Finally, we extend this orientation to $G$ by directing every path from end to begin.
This implies that $G$ has a balanced orientation such that for $k$ odd-degree vertices in $Z$ out-degree is less than in-degree
(Note that this method can also improve the needed edge-connectivity in Theorem 1 in~\cite{Katerinis-Tsikopoulos-2005} by one).
Again, it is enough to apply Theorem~\ref{thm:Alon-Tarsi-1992} to complete proof.
}\end{proof}
Motivated by Corollary~\ref{cor:bipartite} and Conjecture~\ref{conj:graph:epsilon:c}, 
we would like to pose the following problem.
\begin{prob}
{Prove or disprove: For every positive integer $k$, there exists a positive integer $c_k$ such that every $k$-colorable graph $G$ admits an $L$-coloring for every mapping $L:V(G)\rightarrow 2^\mathbb{Z}$ satisfying 
$|L(v)|\ge \frac{1}{2}(d_G(v)+c_k)$ for each vertex $v$.
}\end{prob}
Erd\H os, Rubin, and Taylor (1980) \cite{Erdos-Rubin-Taylor-1980} showed that the complete bipartite graph $K_{n,n}$ with $n=\binom{2k-1}{k}$ is not $k$-choosable. 
We observed that there is a subgraph with significantly smaller size and maximum degree which is still not $k$-choosable according to the following theorem. This construction introduces a non-$3$-choosable graph with size $43$.
It is known that the complete bipartite graph $K_{7,7}$ is not $3$-choosable (using lists obtained from Fano plane for each partite set); see~\cite[Page 129]{Erdos-Rubin-Taylor-1980}. By removing the edges of a matching of size $6$ from it, one can make another small non-$3$-choosable graph with size $49-6=43$ (both edges of the removed edges should have the same list).
\begin{thm}\label{thm:Kn-n:smaller-graph}
{For every integer $k$ with $k\ge 2$, there exists a non-$k$-choosable subgraph of the complete bipartite graph $K_{n,n}$ with $n=\binom{2k-1}{k}$ having $\sum^{k-1}_{i=0}\binom{k}{i}\binom{k+i}{k}$ edges and maximum degree $2^k-1$.
In particular, its vertex degrees lie in the following set:
$$\Big\{\sum_{j=i}^{k-1}\binom{k}{j}: 0\le i\le k-1\Big\}.$$
In addition, there exists a non-$k$-choosable subgraph of the complete bipartite graph $K_{n,n}$ having $\sum^{k-1}_{i=0}\binom{k}{i}\binom{k+i}{k}+k^2-1$ edges and maximum degree $2^k-2$ provided that $k\ge 3$.
}\end{thm}
\begin{proof}
{Let $(V_1, V_2) $ be a bipartition of $K_{n,n}$.
For every $v\in V_j$, we assign a unique subset $L(v)$ of $\{1,2,\ldots, 2k-1\}$ with size $k$ such that all of them are distinct, where $j=1,2$. Note that since $n=\binom{2k-1}{k}$, this assignment is possible.
 Let $H$ be the spanning subgraph of $K_{n, n}$ such that two vertices $v_1\in V_1$ and $v_2\in V_2$ are adjacent 
 if and only if for an integer $i$ with $i\in \{0,\ldots, k-1\}$, $L(v_1)\cup L(v_2)= \{1,\ldots, k+i\}$.
\begin{figure}[h]
 \centering
 \includegraphics[scale = 1.18]{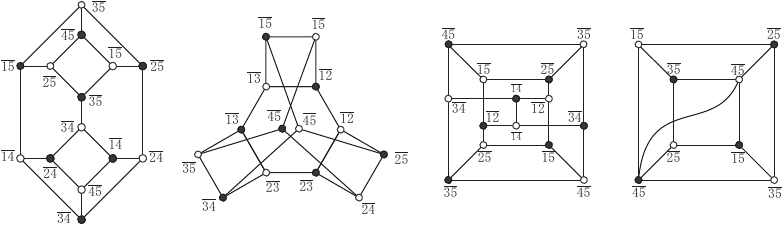}
 \caption{Four bipartite graphs which can be combined with $P(10,3)$ to make non-$3$-choosable bipartite graphs having either size $43$ and maximum degree $7$ or size $51$ and maximum degree $6$.
} 
\label{20BA-combination}
\end{figure}

For $k=2$, this graph is the same middle graph in Figure~\ref{smallest-non-chromatic-choosable-graph}, and for $k=3$, this graph is the union of the right graph in Figure~\ref{20BA-combination} and the left graph in Figure~\ref{20BA-combination-regular} (by keeping lists).
For every edge $e=v_1v_2$ with $L(v_1)\cup L(v_2)= \{1,\ldots, k+i\}$, the symmetric differences 
$L(v_1)\setminus L(v_2)$ and $L(v_2)\setminus L(v_1)$ have the same size $i$ and those can uniquely determine $L(v_1)$ and $L(v_2)$.
For selecting $L(v_1)\setminus L(v_2)$, we have $\binom{k+i}{i}$ choices and after specifying it, we have $\binom{k}{i}$ choices 
for selecting $L(v_2)\setminus L(v_1)$. Thus there are $\binom{k+i}{i}\binom{k}{i}$ edges of this type, and consequently the summation taken over all these numbers is the size of $G$.
Let $v$ be a vertex with $\max L(v)=k+i$. It is not difficult to check that this vertex is connected to 
$\binom{k}{i}$ vertices $u$ satisfying $\max L(u)\le k+i$.
In addition, it is connected to $\binom{k}{j}$ vertices $u$ satisfying $k+i<\max L(u)= k+j$.
Thus the degree of this vertex is $\sum_{j=i}^{k-1}\binom{k}{j}$.
We claim that $H$ does not have an $L$-coloring.
Suppose, to the contrary, that $H$ admits an $L$-coloring $c:V(G)\rightarrow \{1,\ldots, 2k-1\}$ such that for each vertex $v$, $c(v)\in L(v)$. Let us prove the following claim based on the graph construction. By applying this claim with setting $n=k-1$, there are two disjoint subsets $C_{1, n}$ and $C_{2, n}$ of $\{1,\ldots, k+n\}$ of size at least $n+1$. Therefore, $|C_{1, n}\cup C_{2, n}|\ge 2n+2=2k$ which is contradiction, as desired.

{\bf Claim A}: For every integer $n$ with $0\le n\le k-1$, there are two disjoint subsets $C_{1, n}$ and $C_{2, n}$ of $\{1,\ldots, k+n\}$ of size at least $n+1$ such that for every color $p\in C_{j, n}$, 
there is a vertex $v_j\in V_{j}$ with $p=c(v_j)\in L(v_j) = \{1,\ldots, k+n\}\setminus P_j$, where
$P_j$ is a subset of $C_{j, n}$ of size $n$.

{\bf Proof of Claim A}:
By induction on $n$. For $n=0$, we can set $C_{j, 0}=\{c(v_j)\}$, where $v_j$ is the unique vertex in $V_{j}$ with $L(v_j) = \{1,\ldots, k\}$. Since $v_1$ and $v_2$ are adjacent in $G$, their colors must be different and so these sets must be disjoint.
Assume that $n\ge 1$.
By the induction hypothesis, there are two disjoint subsets $C_{1, n}$ and $C_{2, n}$ of $\{1,\ldots, k+n-1\}$ satisfying the claim for $n-1$.
Consequently, there are $n$ colors $c_{j,1},\ldots, c_{j, n}$ in $C_{j, n-1}$.
We recursively define the sequence of colors $\{c_{j, m}\}^\infty_{m=1}$ such that $c_{j, m}=c(u_j)\in L(u_j)$, 
where 
$u_j$ is the unique vertex in $V_{j}$ with $L(u_j) = \{1,\ldots, k+n\}\setminus \{ c_{j, m-n},\ldots, c_{j, m-1}\}$ when $m> n$,
 and $u_j$ is a vertex in $ V_{j}$ with $c_{j, m}=c(u_j)\in L(u_j) = \{1,\ldots, k+n-1\}\setminus P_j$ for which
$P_j$ is a subset of $C_{j, n-1}$ of size $n-1$ when $m\le n$.
Let $S_j=C_{j, n-1}\cup \{c_{j, i}: i> n\}$.
We first show that $S_1$ and $S_2$ are disjoint.
Suppose, to the contrary, that $S_1\cap S_2\neq \emptyset$.
Since $C_{1, n-1}$ and $C_{2, n-1}$ are disjoint, we may assume that there is a color $c_{j, x_j}\in S_1\cap S_2$ with $x_j> n$.
Put $j'\in \{1,2\}\setminus \{j\}$.
We consider the integer $x_j$ with the smallest size. 
This means that $c_{j, i}\not \in S_{j'}$ for every $i$ with $i< x_j$.
Let $v_j$ be the unique vertex in $V_{j}$ with $L(v_j) = \{1,\ldots, k+n\}\setminus \{c_{j, x_j-n},\ldots, c_{j, x_j-1}\}$.
Clearly, we have the following two cases: 

{\bf Case 1}. $c_{j, x_j}=c_{j', x_{j'}}\in C_{j', n-1}$ for an integer $x_{j'}$ with $x_{j'}> n$.
In this case, we let $v_{j'}$ be the unique vertex in $V_{j'}$ with $L(v_{j'}) = \{1,\ldots, k+n\}\setminus \{c_{j', x_{j'}-n},\ldots, c_{j', x_{j'}-1}\}$. By the minimality property of $x_j$, color sets $\{c_{j, x_{j}-n},\ldots, c_{j, x_{j}-1}\}$ and $\{c_{j', x_{j'}-n},\ldots, c_{j', x_{j'}-1}\}$ must be disjoint. Consequently,
$L(v_1)\cup L(v_2)=\{1,\ldots, k+n\}$. This means that $v_1$ and $v_2$ are adjacent in $G$ and so their colors 
 $c_{j, x_j}$ and $c_{j', x_{j'}}$ must be different, which is a contradiction.

{\bf Case 2}. $c_{j, x_j} \in C_{j', n-1}$.
By the induction hypothesis, there is a subset of $P$ of $C_{j', n-1}$ of size $n-1$ satisfying
$c_{j, x_j}=c(v_{j'})\in L(v_{j'})$, where $v_{j'}$ is the unique vertex in $V_{j'}$ with $L(v_{j'}) = \{1,\ldots, k+n-1\}\setminus P$.
 By the minimality property of $x_j$, color sets $\{c_{j, x_{j}-n},\ldots, c_{j, x_{j}-1}\}$ and $C_{j', n-1}$ must be disjoint and so $\{c_{j, x_{j}-n},\ldots, c_{j, x_{j}-1}\}$ and $P$ are disjoint as well. 
Therefore, $L(v_1)\cup L(v_2)=\{1,\ldots, k+n-1\}$ when $k+n\in \{c_{j, x_{j}-n},\ldots, c_{j, x_{j}-1}\}$, 
and
$L(v_1)\cup L(v_2)=\{1,\ldots, k+n\}$ when $k+n\not\in \{c_{j, x_{j}-n},\ldots, c_{j, x_{j}-1}\}$, . This means that $v_1$ and $v_2$ are adjacent in $G$ and so their colors 
 $c_{j, x_j}$ and $c_{j', x_{j'}}$ must be different, which is again a contradiction. 
Consequently, in both cases $S_1$ and $S_2$ are disjoint.
Since the number of $n$-tuples are finite, there are two indices $a_j$ and $b_j$ with $n < a_j< b_j$ such that 
$(c_{j, a_j}, \ldots, c_{j, a_j+n-1})=(c_{j, b_j}, \ldots, c_{j, b_j+n-1})$. 
According to the definition of the sequence $\{c_{j, m}\}^\infty_{m=1}$, any color $c_{j, m}$ is different from any color $c_{j, m+t}$ with $0< t\le n$ which implies that $|b_j-a_j| \ge n+1$. 
On the other hand, one can inductively conclude that $c_{j, m}=c_{j, m+(b_j-a_j)}$ for all $m\ge a_j$.
Let us define $C_{j, n}=\{c_{j,m}: m \ge a_j\}$. Obviously, $|C_{j, n}|\ge n+1$. Since $S_1$ and $S_2$ are disjoint, 
$C_{1, n}$ and $C_{2, n}$ are disjoint as well.
In addition, for every color $c_{j, m}\in C_{j, n}$ with $m\ge b_j$, there is a subset $P_j=\{c_{j, m-n+1},\ldots, c_{j, m}\}$ of $C_{j, n}$ of size $n$ satisfying
$c_{j, m}=c(v_j)\in L(v_j)$, where $v_j$ is the unique vertex in $V_{j}$ with $L(v_j) = \{1,\ldots, k+n\}\setminus P_j$.
Hence the claim is proved. $\square$

To construct the required subgraph $H_0$ of the second assertion, we delete $k+1$ edges from $H$ and insert $k^2+k$ edges in
such a way that $\Delta(H_0)=\Delta(H)-(k+1)+k= 2^k-2$ and $|E(H_0)|=|E(H)|+k^2-1$. 
More precisely, we define $H_0$ be the spanning subgraph of $K_{n, n}$ such that two vertices $v_1\in V_1$ and $v_2\in V_2$ are adjacent 
 if and only if one of the following conditions holds:

\begin{enumerate}{

\item [$\bullet$]
For an integer $i\in \{2,\ldots, k-1\}$, $L(v_1)\cup L(v_2)= \{1,\ldots, k+i\}$.

\item [$\bullet$]
For an integer $c\in \{1,\ldots, k\}$, $L(v_{3-t})=(\{1,\ldots, k\}\cup \{k+t\})\setminus \{c\}$ for both integers $t\in \{1,2\}$.

\item [$\bullet$]
 For an integer $i\in \{1,2\}$, $L(v_1)\cup L(v_2)= \{1,\ldots,k\} \cup \{k+i\}$ provided that $L(v_i)\neq \{1,\ldots, k\}$.
}\end{enumerate}
For $k=3$, this graph is the union of the left graphs in Figures~\ref{20BA-combination} and~\ref{20BA-combination-regular}.
The proof of this part is similar to the above-mentioned arguments with minor modifications.
In fact, we first need to prove the following claims for the base step of our induction.
In these claims, $w_i$ denotes the unique vertex in $V_i$ with $L(w_i)=\{1,\ldots, k\}$. 

{\bf Claim B}: 
If $c(w_1)\neq c(w_2)$, then 
there are two disjoint color sets $C_{1}$ and $C_2$ of size at least $2$ 
satisfying  $c(w_{j'})\not \in C_{j}\subseteq \{1,\ldots, k\}\cup \{k+j\}$ such that for every color $p\in C_{j}$, there is a vertex $v_j\in V_{j}$ with $p=c(v_j)\in L(v_j) =(\{1,\ldots, k\}\cup \{k+j\})\setminus \{p_j\}$, 
where  $p_j \in C_{j}$ and $j'=3-j$.

{\bf Proof of Claim B}: Apply the same argument stated in the proof of Claim A for $n=1$ by setting $C_{j,0}=\{c(w_j)\}$. 
More precisely, we define $C_{j, 1}$ to be a subset of $\{1,\ldots, k\}\cup \{k+j\}$  such that
$u_j$ is the unique vertex in $V_{j}$ with $L(u_j) =(\{1,\ldots, k\}\cup \{k+j\}\setminus \{ c_{j, m-1}\})$ when $m> n$,
and $u_j=w_j$ when $m\le n$.
Note that the color $c(w_{j'})\in C_{j',0}$  must automatically be different from the color of all vertices $v\in V_j$ having  a list $L(v)\subseteq \{1,\ldots, k\}\cup \{k+j\}$, because $v$ and $w_{j'}$  are adjacent in $H_0$ when $v\neq w_j$. This is useful to prove the desired assertion  for Case 2.  $\square$

{\bf Claim C}: 
If $c(w_1)= c(w_2)$, for an integer $i\in \{1,2\}$, 
there are two disjoint color sets $C_{1}$ and $C_{2}$ of $\{1,\ldots, k\}\cup \{k+i\}$ of size at least $2$ 
satisfying $c(w_{j'})\not \in C_{j}$ or $k+j'\in C_{j}$ such that for every color $p\in C_{j}$, there is a vertex $v_j\in V_{j}$ with $p=c(v_j)\in L(v_j) =(\{1,\ldots, k\}\cup \{k+i\})\setminus \{p_j\}$, 
where  $p_j \in C_{j}$  and $j'=3-j$.

{\bf Proof of Claim C}:  Suppose, to the contrary, the claim is false.  Let $c=c(w_1)$.  
Since $v_1v_2\in E(H_0)$ for every $v_1\in V_1$ with $L(v_1)\subseteq \{1,\ldots, k+2\}$ and every $v_2\in V_2$ 
with  $L(v_2)\subseteq \{1,\ldots, k\} \cup \{k+1\}$, the color $c$ cannot appear on both sides of this edge.
This implies that there is an integer  $i\in \{1,2\}$ such that the color $c$ did not appear on all vertices $v\in V_{i'}$ satisfying 
$L(v)\subseteq \{1,\ldots, k\} \cup \{k+i\}$ and $v\neq w_{i'}$, where $i'=i$.
Let $Q_{j, i}$ be a directed graph with $V(Q_{j, i})=\{1,\ldots, k\}\cup \{k+i\}$ such that $xy$ is a directed edge from $x$ to $y$
 if $c(v)=y$, 
where $v$ is the unique vertex in $V_j$ with $L(v)=(\{1,\ldots, k\}\cup \{k+i\})\setminus \{x\}$.
The out-degree of every vertex in this digraph is precisely one. 
Thus there is a directed cycle $\mathcal{C}_{j, i}$ in $Q_{j, i}$ for every $j\in \{1,2\}$.
First assume that $k+i\not\in  V(\mathcal{C}_{i', i})$. This implies that $c\not\in  V(\mathcal{C}_{i',i})$, because the color $c$ did not appear on all vertices $v\in V_{i'}$ satisfying 
$L(v)\subseteq \{1,\ldots, k\} \cup \{k+i\}$ and $v\neq w_{i'}$.
 If $V(\mathcal{C}_{1, i})\cap V(\mathcal{C}_{2, i})=\emptyset$, then 
by setting $C_1=V(\mathcal{C}_{1, i})$ and $C_2=V(\mathcal{C}_{2, i})$, one can derive a contradiction.
 If $V(\mathcal{C}_{1, i})\cap V(\mathcal{C}_{2, i})\neq \emptyset$,
 then it is not difficult to check that 
$V(\mathcal{C}_{1, i})= V(\mathcal{C}_{2, i})$. 
Then $V(\mathcal{C}_{j, i})$ itself must be the vertex set of a component of $Q_{j, i}$ according to the edges of $H_0$.
Therefore,  there is a directed cycle $\mathcal{C}'_{i, i}$ in $Q_{i, i}\setminus V(\mathcal{C}_{i, i})$.
 Then by setting $C_{i'}=V(\mathcal{C}_{i', i})$ and $C_{i, i}=V(\mathcal{C}'_{i, i})$, one can again derive a contradiction.

Now,  assume that $k+i\in  V(\mathcal{C}_{i'})$ and so $c\in  V(\mathcal{C}_{i'})$.
 If $V(\mathcal{C}_{1, i})\cap V(\mathcal{C}_{2, i})=\emptyset$, 
then by setting $C_1=V(\mathcal{C}_{1, i})$ and $C_2=V(\mathcal{C}_{2, i})$, one can derive a contradiction.
 If $V(\mathcal{C}_{1, i})\cap V(\mathcal{C}_{2, i})\neq \emptyset$, then it is not difficult to check that 
$V(\mathcal{C}_{1, i})= V(\mathcal{C}_{2, i})$. 
Then $V(\mathcal{C}_{i, i})$ itself must be the vertex set of a component of $Q_{i,i}$ according to the edges of $H_0$. 
If $V(\mathcal{C}_{i, i})\neq V(Q_{i, i})$, then there is a directed cycle $\mathcal{C}'_{i, i}$ in $Q_{i, i}\setminus V(\mathcal{C}_{i, i})$.
By setting $C_1=V(\mathcal{C}_{1, i})$ and $C_2=V(\mathcal{C}'_{2, i})$, one can derive a contradiction.
So, suppose that $V(\mathcal{C}_{i'})= \{1,\ldots, k\} \cup \{k+i\}$. 
In this case, one can repeat the same arguments for $i'$ and conclude that there is a directed cycle $\mathcal{C}_{i, i'}$  of $Q_{i,i'}$ with $V(\mathcal{C}_{i, i'})=\{k+i', c\}$. This can lead us to drive a contradiction similarly, because 
 $V(\mathcal{C}_{i, i'})\neq  \{1,\ldots, k\} \cup \{k+i'\}$. Hence the claim holds. $\square$

In the next step, we define $C_{1,n-1}=C_1$ and $C_{2,n-1}=C_2$ to prove Claim A for $n=2$ in an analogous manner.
More precisely, the condition $c(w_{j})\not \in C_{j'}$ let us to avoid Case 2 when  $c_{j, x_j} =c(w_{j})$, 
and  the condition $k+j_0 \in C_j$ will not allow the vertex $w_j$ appears in vertices for constructing  sequence of colors $\{c_{j, m}\}^\infty_{m=1}$.
Finally, by an induction argument, one can analogously prove Claim~A for all $n\ge 3$ to derive a contradiction. 
Hence the proof is completed.
}\end{proof}
Motivated by Theorem~\ref{thm:Kn-n:smaller-graph}, we show below that $f(k, b) < (b-1) \sum_{0\le j\le\lceil\frac{k}{b-1}\rceil-1}\binom{k}{j}$ in the following theorem using a similar construction. 
\begin{thm}
{For any two integers $k$ and $b$ be two positive integers with $k\ge b\ge 2$, there is a $b$-colorable non-$k$-choosable graph $G$ satisfying 
$\Delta(G)=(b-1) \sum_{0\le j\le \lceil\frac{k}{b-1}\rceil-1}\binom{k}{j}$.
}\end{thm}
\begin{proof}
{Let $V_1,\ldots, V_b$ be $b$ disjoint sets of vertices such that all subsets of $\{1, \ldots, k+\lceil\frac{k}{b-1}\rceil-1\}$ of size $k$ are distributed on vertices of $V_i$. We denote this list assignment by $L$. Let $G$ be the graph with $V(G)=V_1\cup \cdots \cup V_b$ such that any two vertices $v_i\in V_i$ and $v_j\in V_j$ are adjacent if $i\neq j$ and $L(v_i)\cup L(v_j)=\{1,\ldots, k+t\}$ for an integer $t$ with $0\le t\le \lceil\frac{k}{b-1}\rceil-1 $. By an argument similar to the proof of Theorem~\ref{thm:Kn-n:smaller-graph}, one can show that $G$ is $b$-colorable but not $k$-choosable (using the list assignment of $L$). More precisely, one can prove the following claim by induction on $n$.
{\it Claim}: For every integer $n$ with $0\le n\le \lceil\frac{k}{b-1}\rceil-1$, there are $b$ disjoint subsets $C_{1, n},\ldots, C_{b, n}$ of $\{1,\ldots, k+n\}$ of size at least $n+1$ such that for every color $p\in C_{j, n}$, 
there is a vertex $v_j\in V_{j}$ with $p=c(v_j)\in L(v_j) = \{1,\ldots, k+n\}\setminus P_j$, where
$P_j$ is a subset of $C_{j, n}$ of size $n$.
By applying this claim with setting $n=\lceil\frac{k}{b-1}\rceil-1$, there are $b$ disjoint subsets $C_{1, n},\ldots, C_{b, n}$ of $\{1,\ldots, k+n\}$ of size at least $n+1$. Therefore, $|C_{1, n}\cup \cdots \cup C_{b, n}|\ge b(n+1)\ge k+n+1 $ which is contradiction, as desired.
Note that every vertex $v\in V_i$ with $\max L(v)=k+t$
is adjacent to $\binom{k}{t'}$ vertices $v'\in V_{i'}$ satisfying $\max L(v)=k+t'$ and $t< t'\le \lceil \frac{k}{b-1}\rceil -1$, where $i'\neq i$.
In addition, it is adjacent to $\binom{k}{t}$ vertices $v'\in V_{i'}$ satisfying $\max L(v)=k+t'$ and $0\le t' \le t$, where $i'\neq i$.
}\end{proof}
 For the special case $k=3$, by an innovative computer search, we observed that the graph introduced in Theorem~\ref{thm:Kn-n:smaller-graph} is the unique smallest bipartite graph of order $20$ which is not $2$-strongly $3$-choosable. In addition, we observed that there are $605$ edge-minimal bipartite graphs of order $20$ having maximum degree $6$ which are not $2$-strongly $3$-choosable, and among all of them three graphs have the smallest size $51$. 
\begin{observ}\label{observ:P(10, 3)}
{There is a bipartite graph $G$ of order $20$ having maximum degree $7$ and size $43$ which is not $2$-strongly $3$-choosable.
Moreover, there are three bipartite graphs $G$ of order $20$ having maximum degree $6$ and size $51$ which are not $2$-strongly $3$-choosable. 
}\end{observ}
\begin{proof}
{Let $\mathbb{Z}_5$ be the set of five colors, and let $S$ be the set of all subsets of $\mathbb{Z}_5$ having exactly $3$ colors. Note that $|S|=10$. For every $L\in S$, we consider two vertices $v$ and $v'$ with the same list $L$. In addition, we add an edge between $v$ and $v'$, if those two lists corresponding them have exactly one color in common. 
Let $G$ be the resulting bipartite graph. Note that $G$ is isomorphic to the generalized Petersen graph $P(10,3)$.
We can combine this graph with a graph illustrated in Figure~\ref{20BA-combination} such that lists of identified vertices would be the same (we denoted by $\overline{ij}$ the list $\mathbb{Z}_5\setminus \{i,j\}$). By a computer search, we observed that every such a graph does not admit an $L$-coloring (using only colors from $\mathbb{Z}_5$).
}\end{proof}

 It is not difficult to verify that if a subgraph of $K_{n,n}$ is not $(k-1)$-strongly $k$-choosable, then this subgraph must contain a copy of the middle layer graph with the parameter $k$. (For the special case $k=3$, this middle layer graph is the same generalized Petersen graph $P(10, 3)$). By a computer search on the specified outputs of the program {\it genreg} due to Meringer~\cite{Meringer-1999}), we observed that there exists a special non-$3$-choosable $6$-regular bipartite graph of order $20$ having two edge-disjoint copies of the generalized Petersen graph $P(10, 3)$; see Figure~\ref{20BA-combination-regular}. Motivated by this graph, we would like to pose the following problem.
\begin{prob}
{Let $k$ be a positive integer with $k\ge 4$. How many copies of the middle layer graph with the parameter $k$ are enough to make a bipartite graph of order $2\binom{2k-1}{k}$ which is not $(k-1)$-strongly $k$-choosable? 
}\end{prob}
\begin{figure}[h]
 \centering
 \includegraphics[scale = 1.15]{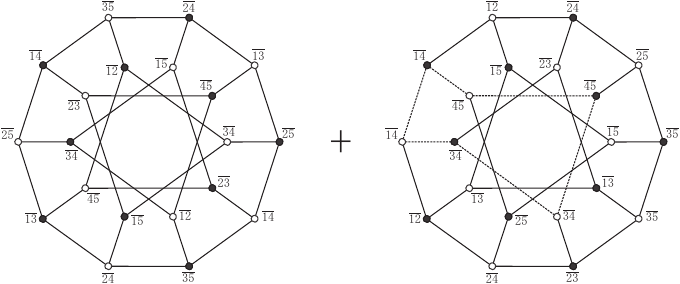}
 \caption{A non-$3$-choosable $6$-regular bipartite graph of order $20$ which can be edge-decomposed into two isomorphic copies of the generalised Petersen graph $P(10,3)$ (six dotted edge can be deleted to make a smaller non-$3$-choosable graph).} 
\label{20BA-combination-regular}
\end{figure}

In 2002 Bessy, Havet, and Palaysi~\cite{Bessy-Havet-Palaysi-2002} constructed a non-$3$-choosable bipartite graph $G$ with maximum degree $6$ having only $128$ vertices (the graph illustrated in Figure~\ref{20BA-combination-regular} shows that this number can be reduced to $20$). They also constructed a non-$3$-choosable bipartite graph $G$ with maximum degree $5$ having $846$ vertices. We observed that this order can be reduced to $390$. More precisely, we only need to replace the left and right graphs in Figure~\ref{fig:Bipartite-D5} instead of the graphs in Figures~2 and~3 in \cite{Bessy-Havet-Palaysi-2002} and repeat the same procedure of the proof (furthermore, this order can be pushed down to $376$ by modifying some lists to identify more pairs of vertices with degree $3$).
\begin{observ}
{There exists a non-$3$-choosable bipartite graphs $G$ of order $390$ with maximum degree $5$.
}\end{observ}
\begin{figure}[h]
 \centering
 \includegraphics[scale = 1.7]{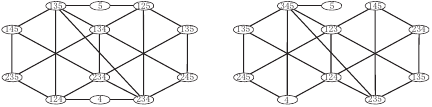}
 \caption{Two almost non-$3$-choosable bipartite graphs without given list-colorings.} 
\label{fig:Bipartite-D5}
\end{figure}
%
%
%
\subsection{Planar graphs}
It is known that every planar graph is $4$-colorable~\cite{Appel-Haken-1976}. 
We feel that Conjecture~\ref{conj:simpler} holds for planar graphs with the following stronger version. 
\begin{conj}
{Every planar graph with maximum degree at most $6$ is $4$-choosable.
}\end{conj}
One may ask whether this conjecture holds by replacing a bit larger upper bound on the maxim degree.
So, we would like to pose the following question.
\begin{prob}\label{prob:k}
{What is the maximum number $k$ such that every planar graph $G$ satisfying $\Delta(G)\le k$ is $4$-choosable? 
Is there a $5$-connected non-$4$-choosable planar graph? (or even without separating $3$-cycles and $4$-cycles)?
If yes, what is the minimum value of maximum degrees?
}\end{prob}
In 1993 Voigt \cite{Voigt-1993} constructed a non-$4$-choosable planar graph of order $238$ with maximum degree $38$. 
Later, Mirzakhani (1996)~\cite{Mirzakhani-1996} reduced the order to $63$ but by increasing the maximum degree to $42$.
At about the same time, Gutner (1996) \cite{Gutner-1996} introduced another simple construction using $75$ vertices for which two vertices have degree $48$. These bounds on the maximum degree can be pushed further down as the following theorem. Note that the construction of Mirzakhani used five colors in list assignments and Voigt and Wirth~(1997)~\cite{Voigt-Wirth-1997} improved the list assignments of Gutner~\cite{Gutner-1996} to this version.
\begin{thm}
{There exists a $4$-colorable (resp. $3$-colorable) planar graph with maximum degree $8$ (resp. $10$) which is not $4$-choosable.
}\end{thm}
\begin{proof}
{There is a $3$-colorable planar (near triangulation) graph of order $17$ with given lists on vertices so that for every list coloring of it, the color $5$ appears on at least one vertex of the outer face, see~\cite{Mirzakhani-1996}. 
This graph is illustrated in Figure~\ref{fig:Block} (left graph) and its outer face has size $12$. 
We observed that it is possible to reduce this size to $4$ but by increasing the order and maximum degree to $29$ and $12$; see the right graph in Figure~\ref{fig:Block}. Their proofs are based in the following simple and essential claim.
Indeed, this claim can help us to find pairs of non-adjacent vertices $v$ and $u$ such that the color of one vertex is forbidden by the list of the other vertex which implies that their colors must be different. This allows us to add the edge $uv$ by keeping the list coloring and finally fining a clique of order $5$ or $6$, which derives a contradiction based on the number of used colors in these cliques. We shall below examine the strategy of the proof for the third graph. 

{\bf Claim}.
If $C$ is a graph of order $5$ consists of a vertex $v_0$ having the list $\mathbb{Z}_4$ adjacent to all  other vertices and 
a four cycle $v_1v_2v_3v_4$ having lists $\mathbb{Z}_4\setminus \{1\},\ldots, \mathbb{Z}_4\setminus \{4\}$, respectively, then for every such a list coloring, there is an integer $i\in \{1,\ldots, 4\}$ such that $v_{i}$ and $v_{i+1}$ are colored with $i-1$ and $i+2$ (mod $4$), respectively, where $\mathbb{Z}_4=\{1,2,3,4\}$.

Let $P$ be the third graph in Figure~\ref{fig:Block} with given lists on vertices, where $\overline{i}=\mathbb{Z}_5\setminus \{i\}$. (This graph is $3$-colorable and has maximum degree at most $10$). 
We claim that for every list coloring of $P$, the color $5$ appears on at least one black vertex of the outer face. 
Suppose, to the contrary, that $P$ admits a list coloring $c:V(G)\rightarrow \mathbb{Z}_5$ such that the color $5$ does not appear on all black vertices of the outer face. 
Let $C$ be the induced subgraph of $P$ having all black vertices $v_1,\ldots, v_4$ and the vertex $v_0$ adjacent to all of them. Assume that $v_i$ has the list $\overline{i}$. By the above-mentioned claim, there is an integer $i\in \{1,\ldots, 4\}$ such that $v_{i}$ and $v_{i+1}$ are colored with $i-1$ and $i+2$, respectively. 
Consider the four cycle $v_{i} v_{i+1}w_{i-1} w_{i+2}$ having lists 
 $\overline{i},\overline{i+1}, \overline{i-1}$, and $\overline{i+2}$, respectively. 
Let $v'_0$ and $w'_0$ be the inner and outer vertices adjacent to all vertices of this cycle.
Note that the lists of $v'_0$ and $w'_0$ are $\overline{i}$ and $\overline{i+1}$.
Since the color of $v_{i}$ is forbidden by the list of $w_{i-1}$, their colors are different and so we can keep this list coloring $c$ by adding the edge $v_{i}w_{i-1}$. Likewise, we can keep this list coloring $c$ by adding the edge $v_{i+1}w_{i+2}$.
If we can also keep this list coloring $c$ by adding the edge $v'_0w'_0$, then the induced subgraph with vertices $\{v_{i}, v_{i+1},w_{i-1} ,w_{i+2},v'_0,w'_0\}$ forms a clique of order $6$ and so we have at least six colors, which is a contradiction.
Therefore, both vertices $v'_0$ and $w'_0$ receive the same color which must be different from the colors in $\{i, i+1\}$ (because of the forbidden lists) and $\{i-1,i+2\}$ (they are adjacent to $v_i$ and $v_{i+1}$).
This means that both vertices $v'_0$ and $w'_0$ must receive the same color $5$.
(Now, we have shown that why the color $5$ appears on at least one vertex of the outer face of the second graph in Figure~\ref{fig:Block}). 
Therefore, $w_{i-1}$ and $w_{i+2}$ must receive both colors $i$ and $i+1$ 
(because of their forbidden lists and adjacency to $v_{i}$ and $v_{i+1}$). 
Thus we have either $c(w_{i-1})=i$ and $c(w_{i+2})=i+1$ or $c(w_{i-1})=i+1$ and $c(w_{i+2})=i$.
In either case, there is an induced subgraph of $6$ minus a matching $M$ of size three such that 
both vertices of a every edge $e\in E(M)$ must receive different colors (because the color of one vertex is forbidden by the list of the other vertex). 
This means that after inserting $M$, we can keep the list coloring in this subgraph.
 Again, we come up a clique of order $6$ and so we have at least six colors, which is a contradiction. Consequently, the claim holds.
(Note that the proof of all graphs in Figure~\ref{fig:Block} are similar in order to make cliques of size $6$, except the first graph for which we should create a clique of order $5$ with vertices $v_{i}, v_{i+1},w_{i-1} ,w_{i+2},v'_0$
 after adding two edges $v_{i}w_{i-1}, v_{i+1}w_{i+2}$ but all its vertices receive colors only from $\mathbb{Z}_4$).

Let $H$ be the $3$-colorable planar $4$-regular illustrated in Figure~\ref{4-regular} (right graph) with given lists on vertices and also for each $v\in V(H)$, we insert a copy of $P$ and join $v$ to all four black vertices on the outer face of that copy $P_v$. It is easy to check that the resulting graph $G$ is $3$-colorable but not $4$-choosable (using the same lists of the figures). In addition, it has maximum degree $10$.

\begin{figure}[h]
 \centering
 \includegraphics[scale = 1.2]{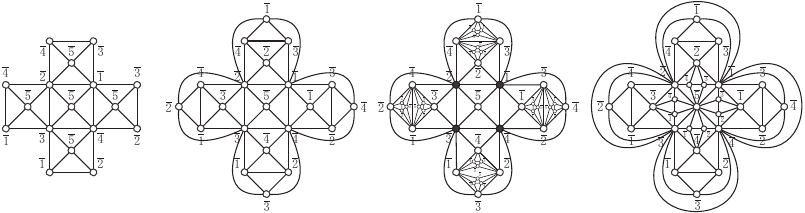}
 \caption{Four planar graphs $G$ that any list coloring of them contains the color $5$ on at least one vertex of the outer face $F$ (in particular, black vertices for the third graph), where 
$|F|\in \{12,8,4\}$ and $\Delta(G)\in \{7,9,12\}$.} 
\label{fig:Block}
\end{figure}

Now, we are going to push down the upper bound on the maximum degree for $4$-chromatic planar graphs.
It is not difficult to check that the graph illustrated in Figure~\ref{fig:maximum-8-planar} (left part) 
does not admit a list coloring with respect to the given lists, where $\bar{i}=\mathbb{Z}_5\setminus \{i\}$.
Let $v_i$ the vertex with the list $\{i\}$, where $i=1,2$. 
Let $L_{1,2}$ be the graph obtained from two copies of the graph illustrated in Figure~\ref{fig:maximum-8-planar} (right graph) 
with the same lists by identifying the first copy of $v_1$ and the second copy of $v_2$ into a single vertex $a_1$ and
by identifying the second copy of $v_1$ and the first copy of $v_2$ into a single vertex $a_2$. Let $b_1$ be the black vertex of the first copy and let $b_2$ be the black vertex of the second copy.
Let $H_{x,y}$ be the graph obtained from $L_{1,2}$ by distributing the same list $\bar{5}$ on both vertices $a_1$ and $a_2$, and also by 
permuting colors $\{1,2,3,4\}$ such that the color $1$ is replaced by $x$ and the color $2$ is replaced by $y$.
It is not difficult to check that the graph $H_{x, y}+b_1b_2$ does not admit such a list coloring but by replacing the list $\{x,y\}$ on both vertices $a_1$ and $a_2$.

Now, we define the graph obtained from $H_{x,y}$ by using the pattern illustrated in Figure~\ref{fig:maximum-8-planar} (left part) by replacing six graphs $H_{xy}$ in the specified parts so that copies of both black vertices $b_1$ and $b_2$ overlap black vertices of the pattern, where $x,y\in \bar{5}$. Suppose, to the contrary, this graph admits such a list coloring. Let $u_1$ and $u_2$ be the two adjacent vertices in outer cycle of the left graph in Figure~\ref{fig:maximum-8-planar}. 
Assume that these vertices receive different colors $i$ and $j$ from $\bar{5}$.
Then the two black vertices of $H_{i',j'}$ must receive both colors $i$ and $j$ which implies that two copies of $a_1$ and $a_2$ in $H_{i',j'}$ must receive both colors $i'$ and $j'$, where $i', j'\in \bar{5}\setminus \{i, j\}$. This is a contradiction to the property of $H_{i',j'}$. Hence the proof is completed.
\begin{figure}[h]
 \centering
 \includegraphics[scale =0.74]{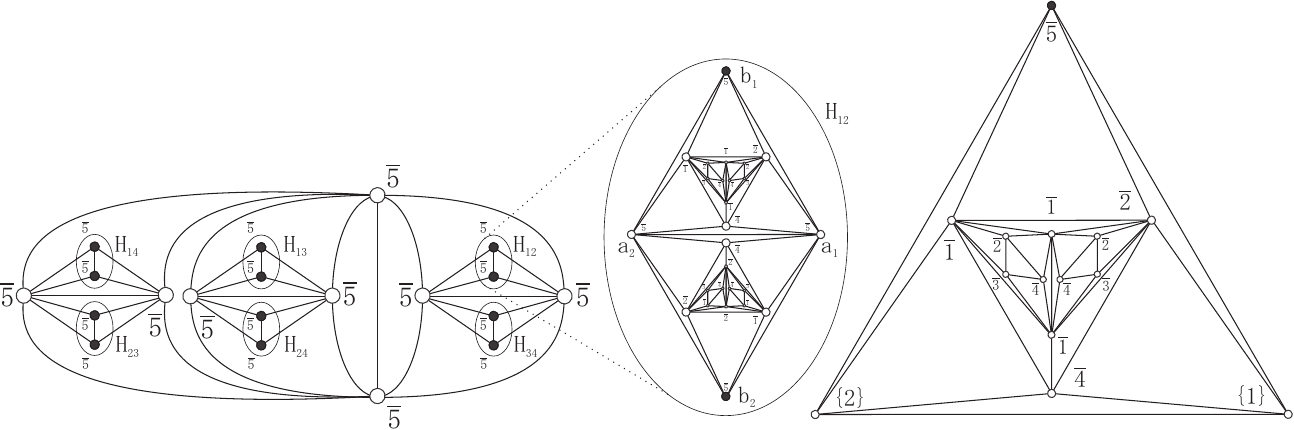}
 \caption{An almost non-$4$-choosable triangulation without a given list-coloring (right), and the frame for making a non-$4$-choosable planar graph with maximum degree $8$ (left).} 
\label{fig:maximum-8-planar}
\end{figure}
}\end{proof}
In 1995 Voigt \cite{Voigt-1995} constructed a non-$3$-choosable planar triangle-free graph of order $166$ with maximum degree $27$. Later, Gutner (1996) \cite{Gutner-1996} improved its order to $164$ and its maximum degree to $18$.
Finally, Glebov, Kostochka, Tashkinov (2005) \cite{Glebov-Kostochka-Tashkinov-2005} constructed a non-$3$-choosable planar triangle-free graph of order $97$ having maximum degree $10$. This bound on the maximum degree can be pushed further down as the following theorem.
\begin{thm}
{There exists a non-$3$-choosable planar triangle-free graph with maximum degree $7$.
}\end{thm}
\begin{proof}
{There exists a planar triangle-free graph $H_z$ with maximum degree $7$ without a list coloring $L$ such that $|L(v)|=3$ for all vertices $v$, except for a vertex $z$ with degree $5$, see \cite{Glebov-Kostochka-Tashkinov-2005}. In particular, $L(z)=\{1\}$. We consider a new five cycle $C_5$ and for each $v\in V(C)$, we select a copy of the graph $H_{v'}$ with the same list assignment by identifying any two vertices $v\in V(C)$ and $v'$ into the same vertex. In addition, we distribute the same list $\{1,2,3\}$ on all vertices of this cycle. Obviously, the resulting planar triangle-free graph has maximum degree at most $7$ and does not admit such a list coloring and so it must not be $3$-choosable.
}\end{proof}
We observed that there are at least $306$ non-$3$-choosable planar triangle-free graphs of order $97$ (using only five colors in lists). In fact, it is enough to replace two graphs with given list assignment of Figure~\ref{fig:Tri-Free} in the graph construction in~\cite{Glebov-Kostochka-Tashkinov-2005}. We understood these two graphs are smallest ones with a desired property by using outputs of the program {\it plantri} due to Brinkmann and McKay~\cite{Brinkmann-McKay-2007}. (The second one also appeared in~\cite[Figure 2]{Montassier-2006} with the same list assignment and the underline graph was originally introduced by Voigt (1995) \cite[Figure 2]{Voigt-1995}).
Finally, we pose the following problem for further investigation.

\begin{figure}[h]
 \centering
 \includegraphics[scale = 1.15]{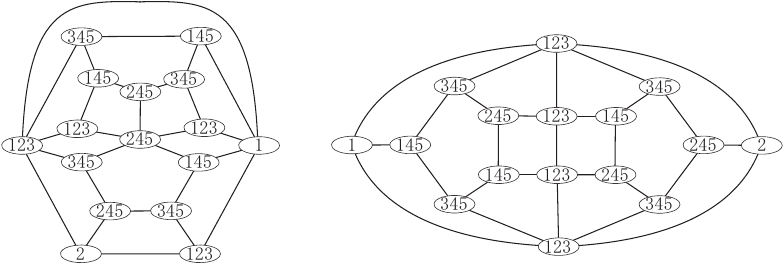}
 \caption{Two almost non-$3$-choosable triangle-free planar graphs of order $16$ without given list-colorings (using five colors).} 
\label{fig:Tri-Free}
\end{figure}

\begin{prob}
{Is there a planar triangle-free graph which is not $1$-strongly $3$-choosable? If yes, what is the minimum value of maximum degrees?
}\end{prob}
\section{Acknowledgements}
The author would like to appreciate Masaki Kashima for carefully checking proofs of the paper and finding some mistakes.
%
%
%
%
%
%
%
%
%

\end{document}